\numberwithin{equation}{section}
\newtheorem{theorem}{Theorem}[section]
\newtheorem{lemma}[theorem]{Lemma}
\newtheorem{proposition}[theorem]{Proposition}
\newtheorem{corollary}[theorem]{Corollary}
\theoremstyle{remark}
\newtheorem{remark}[theorem]{Remark}
\newtheorem{remarks}[theorem]{Remarks}
\theoremstyle{remark}
\DeclareMathOperator{\Cl}{\mathrm{C}\ell}
\DeclareMathOperator{\End}{End}
\DeclareMathOperator{\id}{id}
\DeclareMathOperator{\SO}{SO}
\DeclareMathOperator{\Sp}{Sp}
\DeclareMathOperator{\spin}{\g{spin}}
\DeclareMathOperator{\Spin}{Spin}
\DeclareMathOperator{\SU}{SU}
\DeclareMathOperator{\U}{U}
\newcommand{\C}{\ensuremath{\mathbb{C}}}
\newcommand{\Ca}{\ensuremath{\mathbb{O}}}
\newcommand{\eS}{\ensuremath{\mathrm{S}}}
\newcommand{\g}[1]{\operatorname{\ensuremath{\mathfrak{#1}}}}
\newcommand{\HH}{\ensuremath{\mathrm{H}}}
\newcommand{\K}{\ensuremath{\mathbb{K}}}
\newcommand{\PP}{\ensuremath{\mathrm{P}}}
\newcommand{\R}{\ensuremath{\mathbb{R}}}
\renewcommand{\H}{\ensuremath{\mathbb{H}}}
\newcommand{\Mod}[1]{\ (\mathrm{mod}\ #1)}
\begin{document}


\title{Polar actions on Damek-Ricci spaces}

\author[A.\ Kollross]{Andreas Kollross}
\address{Institut f\"{u}r Geometrie und Topologie, Universit\"{a}t Stuttgart, Germany}
\email{kollross@mathematik.uni-stuttgart.de}

\date{\today}

\begin{abstract}
A proper isometric Lie group action on a Riemannian manifold is called polar if there exists a closed connected submanifold which meets all orbits orthogonally. In this article we study polar actions on Damek-Ricci spaces.
We prove criteria for isometric actions on Damek-Ricci spaces to be polar,
find examples and give some partial classifications of polar actions on Damek-Ricci spaces.
In particular, we show that non-trivial polar actions exist on all Damek-Ricci spaces.
\end{abstract}

\subjclass[2010]{53C30, 53C35, 57S20, 22F30}

\keywords{Damek-Ricci space, polar action}

\maketitle


\section{Introduction}
\label{sec:intro}


A proper isometric Lie group action on a Riemannian manifold is called \emph{polar} if there exists a connected closed submanifold, called \emph{section}, which intersects all orbits and always orthogonally.
One may think of a polar action on a Riemannian manifold as a generalization of polar coordinates on Euclidean space.
An important special case is given by actions of cohomogeneity one, i.e.~actions whose principal orbits are hypersurfaces.
In this article we study polar actions on Damek-Ricci harmonic spaces. These spaces are certain solvable Lie groups, endowed with a specific left-invariant Riemannian metric.

Polar actions are very special among general isometric Lie group actions. For example, among the countably infinitely many equivalence classes of irreducible orthogonal representations of a given semisimple compact Lie group, only a small finite number is polar~\cite{Da85}. Furthermore, it appears that for a non-trivial polar action the dimension of the group acting cannot be too small, when compared to the dimension of the space acted on, cf.~\cite{k03,k07}.
Moreover, the sections of polar actions are totally geodesic submanifolds. Thus if there exists a non-trivial polar action on a Riemannian manifold, then, generally speaking, this manifold has a high degree of symmetry and also contains many non-trivial totally geodesic submanifolds, as was remarked in~\cite{DSc}.

However, in generic Riemannian manifolds, there are neither Killing fields nor are there non-trivial totally geodesic submanifolds. It is therefore natural to study polar actions only on very special types of Riemannian manifolds.  In particular, polar actions on Riemannian symmetric spaces, which have a high degree of symmetry and contain many non-trivial totally geodesic subspaces, have been extensively studied by many authors, see e.g.~\cite{B, BDT10, BDT13, biliotti, Da85, DDK12, DK10, gk16, k02, k03, k07, k11, k20, KL12, L11, PT99, tebege, wu}. In fact, polar actions on compact Riemannian symmetric spaces are almost completely classified, see~\cite{KL12} and the references therein.

In contrast, for the symmetric spaces of the non-compact type the classification problem is still mostly open and complete classifications exist only for real~\cite{wu} and complex hyperbolic spaces~\cite{DDK12}; for example, see~\cite{DDR20} for the recently completed classification of cohomogeneity one actions on quaternionic hyperbolic spaces; the result there is surprisingly intricate.

Little is known about polar actions on non-symmetric Riemannian manifolds. Among the results of~\cite{k02} there is a classification of cohomogeneity one actions on isotropy irreducible compact homogeneous spaces (which provides many examples for such actions).

Otherwise, i.e.~apart from these cohomogeneity one examples, there are not many results concerning polar actions on Riemannian manifolds which are not (locally) symmetric.
In~\cite{DDK20} and~\cite{KP}, homogeneous Riemannian manifolds whose isotropy action is polar are studied. The result is that in the special cases which are considered (homogeneous spaces of semisimple Lie groups, generalized Heisenberg groups and Damek-Ricci spaces) the isotropy action is polar if and only if the space is locally symmetric.
In~\cite{DSc}, actions of the torus~$T^{n-1}$ on the Berger sphere~$\eS^{2n-1}$ are studied, the result is that these actions are only polar if the sphere carries the symmetric (round) metric. In particular, on homogeneous spaces which are not locally symmetric,
the only non-trivial examples of polar actions which were previously known are of cohomogeneity one.

In this article we study polar actions on Damek-Ricci spaces. It is natural to consider these spaces in the context of polar actions, as they share some properties with symmetric spaces. In fact, they were introduced by Damek and Ricci~\cite{DR} as counterexamples to the conjecture that locally harmonic manifolds are flat or rank-one locally symmetric.
Furthermore, Damek-Ricci spaces are also similar to symmetric spaces in that they contain many totally geodesic submanifolds, cf.~\cite{knp, Rou}.

Another good reason to study these spaces is that they include the non-compact symmetric spaces of rank one among them and it appears from the partial result in~\cite{k20} that it is exactly the cases missing there (i.e.\ actions given by subgroups of a maximal parabolic subgroup in the isometry group of~$\Ca\HH^2$) which correspond to the presentation of~$\Ca\HH^2$ as a Damek-Ricci space. Therefore, we may hope that progress on polar actions on Damek-Ricci spaces will also shed light on the classification problems for polar actions on~$\H\HH^n$ and ~$\Ca\HH^2$, which are still open.

We obtain new non-trivial examples of polar actions on Damek-Ricci spaces, see e.g.\ Theorem~\ref{thm:polfol}, Theorem~\ref{thm:pasl}, Corollary~\ref{cor:pfol} and Corollary~\ref{cor:psgo}.
One of our main results is the following (for the notation, see Section~\ref{sec:drsp}).

\begin{theorem}\label{thm:main}
Let $S$ be a Damek-Ricci space, $\g{s} = \g{a} \oplus \g{v} \oplus \g{z}$.
Let $\g{b} \subseteq \g{a}$ be a linear subspace and let $\g{w}$ be a linear subspace of the real vector space~$\g{v}$.
Then $\g{h} := \g{b} \oplus \g{w} \oplus \g{z}$ is a subalgebra of~$\g{s}$.
Let $H$ be the corresponding connected closed subgroup of~$S$.
Assume $Q$~acts polarly on the orthogonal complement of~$\g{w}$ in~$\g{v}$ with a section~$\ell$ such that $J_X \ell \perp \ell$ for all $X \in \g{z}$.
Then the $Q \ltimes L(H)$-action on~$S$ is polar.
\end{theorem}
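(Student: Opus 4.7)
The natural candidate for the section is $\Sigma := \exp(\g{a}' \oplus \ell) \subset S$, where $\g{a}' := \g{a} \ominus \g{b}$. The hypothesis $J_X \ell \perp \ell$ for all $X \in \g{z}$ is equivalent, via the defining identity $\langle J_X V, V' \rangle = \langle X, [V,V'] \rangle$, to $[\ell, \ell] = 0$ in $\g{z}$; combined with the scalar action $[\g{a}',\ell] \subseteq \ell$ coming from $[\g{a},\g{v}] \subseteq \g{v}$, this makes $\g{a}' \oplus \ell$ an abelian subalgebra of $\g{s}$, so $\Sigma$ is a connected closed abelian subgroup and $T_e \Sigma = \g{a}' \oplus \ell$ is by construction orthogonal to $\g{h} = \g{b} \oplus \g{w} \oplus \g{z}$.

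My plan proceeds in three main steps. First, I would show that $\Sigma$ meets every $(Q \ltimes L(H))$-orbit. Given $p \in S$, I would decompose $p = \exp(A)\exp(V)\exp(Z)$ using the global diffeomorphism $\g{a} \times \g{v} \times \g{z} \to S$, and then left-translate by suitable elements of $\exp(\g{z})$, $\exp(\g{w})$ and $\exp(\g{b})$, all contained in $H$, performed in this order so that the commutation relations $[\g{v},\g{v}] \subseteq \g{z}$ and $[\g{a},\g{v} \oplus \g{z}] \subseteq \g{v} \oplus \g{z}$ ensure later steps do not undo earlier ones, until $p$ is reduced to the form $\exp(A') \exp(V')$ with $A' \in \g{a}'$ and $V' \in \g{v} \ominus \g{w}$. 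Applying some $q \in Q$ then rotates $V'$ into $\ell$ by the polarity hypothesis, placing the resulting point in $\Sigma$.

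Second, I would verify orthogonality at an arbitrary $s = \exp(A'_0)\exp(V_0) \in \Sigma$, with $A'_0 \in \g{a}'$ and $V_0 \in \ell$. Pulling back tangent vectors to the identity by $dL_{s^{-1}}$, one must show that $dL_{s^{-1}}(T_s \Sigma)$ is orthogonal to $\Ad(s^{-1})\g{h}$ and to the left-translate of the $Q$-orbit direction at $s$. Since $[\ell,\ell] = 0$ and $[\g{v},\g{z}] = 0$, the exponential series for $\Ad(\exp(\pm V_0))$ truncate, giving $\Ad(s^{-1}) B = B + \alpha(B) V_0$, $\Ad(s^{-1}) W = \lambda\bigl(W - [V_0, W]\bigr)$ and $\Ad(s^{-1}) Z = \mu Z$ for $B \in \g{b}$, $W \in \g{w}$, $Z \in \g{z}$ (with $\lambda, \mu > 0$ coming from the scaling action of $A'_0$). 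The inner product of each of these with $\g{a}' \oplus \ell$ is zero: the summand $[V_0, W] \in \g{z}$ is killed because $\ell \perp \g{z}$, and $\mu Z$ is killed for the same reason. The one potentially obstructive term $\alpha(B) \langle V_0, V_0 \rangle$ would require both $\g{b}$ and $\g{a}'$ to be nonzero, but $\dim \g{a} = 1$ for every Damek-Ricci space, so this case does not occur. Finally, polarity of $Q$ on $\g{v} \ominus \g{w}$ at the point $V_0 \in \ell$ provides a $Q$-orbit tangent direction orthogonal to $T_{V_0}\ell$ inside $\g{v} \ominus \g{w}$, hence orthogonal to the pull-back of $T_s \Sigma$.

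The main obstacle will be the second step: tracking how left-trivialization transports the $Q$-orbit tangent space at a non-identity point $s \in \Sigma$ and matching the precise strength of the hypothesis $J_X \ell \perp \ell$ to the vanishing of the cross-terms coming from the non-trivial bracket $[\g{v},\g{v}] \subseteq \g{z}$. Once these checks are in place, the dimension count $\dim \Sigma = \dim \g{a}' + \dim \ell$ matches the cohomogeneity of the $(Q \ltimes L(H))$-action and confirms that $\Sigma$ is indeed a section.
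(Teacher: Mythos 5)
Your argument has a genuine gap in the case $\g{b}=\g{a}$ (since $\dim\g{a}=1$, the theorem splits into the two cases $\g{b}=0$ and $\g{b}=\g{a}$, and it is the second one that fails). There your candidate section is $\Sigma=\exp(\ell)$, and it is not a section: for $s=\exp(V_0)$ with $V_0\in\ell$ and $B=A\in\g{b}$, your own formula gives $\Ad(s^{-1})A=A+\tfrac12 V_0$, while $dL_{s^{-1}}(T_s\Sigma)=\ell$ contains $V_0$; hence the Killing field induced by $A$ pairs with $T_s\Sigma$ to give $\tfrac12\langle V_0,V_0\rangle\neq0$ at every $s\neq e$. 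Your dismissal of this term ("would require both $\g{b}$ and $\g{a}'$ to be nonzero") is incorrect: the term is paired against $\ell$, not against $\g{a}'$, so it is obstructive exactly when $\g{b}\neq0$ and $\ell\neq0$, i.e.\ precisely in the case $\g{b}=\g{a}$. Geometrically, $\exp(\ell)$ is a horosphere-type subgroup orbit inside the totally geodesic subgroup with tangent $\g{a}\oplus\ell\cong\R\HH^{\dim\ell+1}$; already for $S=\R\HH^{2}$ with $\g{z}=\g{w}=0$, $\g{b}=\g{a}$, your $\Sigma$ is a horocycle, which is not totally geodesic and is not met orthogonally by the orbits of $L(\exp\g{a})$. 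So the candidate section itself is wrong in this case, not just the verification.

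The case $\g{b}=0$ of your proposal is essentially sound: $\Sigma=\exp(\g{a}\oplus\ell)$ is the section used in Theorem~\ref{thm:mthm}, and your pointwise $\Ad$-computations for $\g{w}$, $\g{z}$ and the $Q$-directions can be made to work, although the paper obtains this more economically by checking conditions only at the identity via Proposition~\ref{prop:pasl} and Lemma~\ref{lm:DRpc}. To repair the case $\g{b}=\g{a}$ you must take as section not $\exp(\ell)$ but the totally geodesic hypersurface of the hyperbolic space $\exp(\g{a}\oplus\ell)$ tangent to $\ell$ at $e$, i.e.\ a section of the polar action of $L(\exp\g{a})$ by left translations on that hyperbolic space; this submanifold is not a subgroup of~$S$. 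That is exactly the route of the paper: Theorem~\ref{thm:mthm} handles $\g{h}=\g{w}\oplus\g{z}$, and Lemma~\ref{lm:aact} together with Corollary~\ref{cor:aact} adds the $\g{a}$-factor by composing the two polar actions, producing the non-subgroup section. A minor further slip: $\g{a}'\oplus\ell$ is a subalgebra but not abelian when $\g{a}'\neq0$, since $[A,V]=\tfrac12 V$ for $V\in\ell$; this does not affect the argument but should be corrected.
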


This article is organized as follows. In Section~\ref{sec:drsp}, we recall preliminaries on Damek-Ricci spaces, their isometry groups and totally geodesic subspaces. In Section~\ref{sec:polact} we find criteria for the polarity of actions on Damek-Ricci spaces under the assumption that the section is a totally geodesic subgroup. We apply these criteria to obtain new examples and some partial classifications. In the last section we extend some of our results to polar actions where the sections are not totally geodesic subgroups, but hypersurfaces inside of such subgroups.

I would like to thank the anonymous referee for the careful reading of the manuscript, for pointing out some omissions and for making helpful suggestions.


\section{Damek-Ricci spaces}\label{sec:drsp}


Damek-Ricci spaces are certain solvable Lie groups, endowed with a left-invariant Riemannian metric. They are extensions of generalized Heisenberg groups, which, in turn, can be constructed from representation modules of certain Clifford algebras.  Let us briefly recall the construction, for details see~\cite{BTV95}. Let $\g{n}=\g{v}\oplus\g{z}$ be a Lie algebra, endowed with an inner product $\langle \cdot,\cdot \rangle_{\g{n}}$ such that $\langle \g{v},\g{z}\rangle_{\g{n}}=0$ and such that the Lie bracket satisfies $[\g{v},\g{v}]\subseteq\g{z}$ and $[\g{v},\g{z}]=[\g{z},\g{z}]=0$. Define a linear map $J\colon \g{z}\to \mathrm{End}(\g{v})$ by
\[
\langle J_Z U, V\rangle_{\g{n}}=\langle [U,V],Z\rangle_{\g{n}}
\]
for all $U$, $V\in\g{v}$, $Z\in\g{z}$.
The Lie algebra~$\g{n}$ is called a \emph{generalized Heisenberg algebra} if
\[
J_Z^2=-\langle Z,Z \rangle_{\g{n}} \id_{\g{v}}
\]
for all $Z\in\g{z}$.
In this case, the simply connected Lie group~$N$ with Lie algebra~$\g{n}$ is called a \emph{generalized Heisenberg group}.
Now let $\g{n}=\g{v}\oplus\g{z}$ be a generalized Heisenberg algebra and let $\g{a}$ be a one-dimensional real vector space. Let $A \in \g{a}$ be a non-zero vector. We extend the Lie bracket of~$\g{n}$ to $\g{s}:=\g{a} \oplus \g{n}$ by requiring
\[
[A,U]= \frac12U
\]
for all $U \in \g{v}$ and
\[
[A,X]= X
\]
for all $X \in \g{z}$.
We define an inner product on~$\g{s}$ by setting
\[
\langle rA+U+X , sA+V+Y \rangle := rs + \langle U , V \rangle_{\g{n}} + \langle X , Y \rangle_{\g{n}}
\]
for $r,s \in \R$, $U,V\in\g{v}$, $X,Y\in\g{z}$.
Then $\g{s}$ is a solvable Lie algebra and we define $S$ to be the simply connected Lie group with Lie algebra~$\g{s}$. The Lie group~$S$, endowed with the left-invariant Riemannian metric induced by $\langle \cdot,\cdot \rangle$, is called a \emph{Damek-Ricci space}.

The classification of Damek-Ricci spaces follows directly from the classification of Clifford modules, see e.g.~\cite{lm}, where also details on the following remarks can be found. We first recall the classification of Clifford algebras over real vector spaces with positive definite quadratic forms. They are given by Table~\ref{TCl} and the rule $\Cl_{m+8} \cong \Cl_{m} \otimes \R(16)$ for $m \ge 0$.
Here $\K(n)$ denotes the real algebra of $n \times n$-matrices with entries in~$\K = \R, \C, \H$, and $\Cl_m$ denotes the isomorphism type of a Clifford algebra over an $m$-dimensional real vector space with positive definite quadratic form.
\begin{table}[h]
\begin{tabular}{|c||c|c|c|c|c|c|c|c|} \hline
$m$ & 0 & 1 & 2 & 3 & 4 & 5 & 6 & 7 \\ \hline
$\Cl_m$ & $\R$ & $\C$ & $\H$ & $\H \oplus \H$ & $\H(2)$ & $\C(4)$ & $\R(8)$ & $\R(8) \oplus \R(8)$ \\ \hline
\end{tabular}
\hfill \\[0.5em]
\caption{Clifford algebras}\label{TCl}
\end{table}
The equivalence classes of the corresponding Clifford modules can also be read off from Table~\ref{TCl}; there is exactly one equivalence class of irreducible modules in case $m \not\equiv 3\Mod{4}$ and there are exactly two equivalence classes of irreducible modules in case $m \equiv 3\Mod{4}$. The irreducible Clifford modules are given by the natural representations of the algebra of $n \times n$-matrices with entries in~$\K$ on the real vector space~$\K^n$ in case $m \not\equiv 3\Mod{4}$ and they are given by the natural representations of the algebra of $n \times n$-matrices on the real vector space~$\K^n$, restricted to the left, resp.\ right, summand of $\K(n) \oplus \K(n)$  in case $m \equiv 3\Mod{4}$.

Accordingly, for $m \not\equiv 3\Mod{4}$  the equivalence classes of (possibly reducible)
Clifford modules of~$\Cl_m$ are given by the $k$-fold direct sums of irreducible modules, where $k$~runs through the non-negative integers; and for $m \equiv 3\Mod{4}$  the equivalence classes of (possibly reducible) Clifford modules of~$\Cl_m$ correspond to pairs~$(k_+,k_-)$ of  non-negative integers.

The generalized Heisenberg algebras are given as $\g{n} = \g{v} \oplus \g{z}$, where $\g{v}$ is a module of the Clifford algebra~$\Cl(\g{z})$, in such a fashion that the representation of the Clifford algebra~$\Cl(\g{z})$ is defined by the linear map~$Z \mapsto J_Z$, $\g{z} \to \End(\g{v})$.
The corresponding generalized Heisenberg groups~$N$ are denoted by $N(m,k)$  for $m \not\equiv 3\Mod{4}$ and by $N(m,k_+,k_-)$ for $m \equiv 3\Mod{4}$, and accordingly,  the corresponding Damek-Ricci spaces~$S$ are denoted by $S(m,k)$  for $m \not\equiv 3\Mod{4}$ and by $S(m,k_+,k_-)$ for $m \equiv 3\Mod{4}$.
The spaces $S(m,k_+,k_-)$ and $S(m,k_-,k_+)$ are isometrically isomorphic.
Let us remark that Damek-Ricci spaces with~$\g{v}\neq0$ are non-symmetric, except for the following isometries.
\begin{align*}
\R\HH^{k+1} \cong S(0,k), \;
\C\HH^{k+1} \cong S(1,k), \;
\H\HH^{k+1} \cong S(3,k), \;
\Ca\HH^2 \cong S(7,1,0) \cong S(7,0,1).
\end{align*}
Note that the spaces~$S(k,0)$, where $\g{v}=0$, are homothetic to $\R\HH^{k+1}$; there is an isomorphism $S(0,k) \to S(k,0)$, which is an isometry up to scaling.

\begin{remark}\label{rem:iso}
Let $S$ be a non-symmetric Damek-Ricci space and let $N$ be the corresponding generalized Heisenberg group, $\g{s} = \g{a} \oplus \g{n}$.
Let $A(N)$ denote the automorphisms of~$N$ whose differential at the identity element~$e$ is a linear isometry of~$(\g{n},\langle \cdot,\cdot \rangle_{\g{n}})$; these extend to automorphisms of~$S$ whose differential at the identity element~$e$ acts trivially on~$\g{a}$.
The isometry group of~$S$ is the semidirect product $A(N) \ltimes L(S)$, where for any subgroup~$H$ of~$S$ we denote by~$L(H)$ the subgroup of the isometry group of~$S$ given by left translations by elements of~$H$.
The groups $A(N)$ were determined in~\cite{riehm}.
In case $m\not\equiv 3
\Mod 4$ the connected component $A(N)_{0}$ of the group $A(N)$ is
isomorphic to
\begin{equation*}
A(N(m,k))_0  =  \left\{
\begin{alignedat}{2}
&\Spin(m)\cdot\SO(k), \quad && \hbox{~if~} m \equiv 0,6 \Mod{8}, \\
&\Spin(m)\cdot\U(k), && \hbox{~if~} m \equiv 1,5 \Mod{8}, \\
&\Spin(m)\cdot\Sp(k), && \hbox{~if~} m \equiv 2,4 \Mod{8}. \\
\end{alignedat}
\right.
\end{equation*}
In case $m\equiv 3 \Mod 4$ the connected component $A(N)_{0}$ of
the group $A(N)$ is isomorphic to
\begin{equation*}
A(N(m,k_+,k_-))_0  =  \left\{
\begin{alignedat}{2}
&\Spin(m)\cdot[\Sp(k_{+})\times\Sp(k_{-})], \quad && \hbox{~if~} m\equiv 3 \Mod{8}, \\
&\Spin(m)\cdot[\SO(k_{+})\times\SO(k_{-})], && \hbox{~if~} m\equiv 7 \Mod{8}. \\
\end{alignedat}
\right.
\end{equation*}
The actions of the groups above on~$\g{s}$ are as follows. The action is trivial on~$\g{a}$; it is equivalent to the standard~$\SO(m)$-representation on~$\g{z}$. In case $m \not \equiv 3 \Mod{4}$, the $A(N)_0$-representation on~$\g{v}$ is equivalent to $\Delta \otimes_\K \K^k$, where $\Delta$ denotes
the spin-representation of~$\Spin(m)$ on~$\K^n$, $n$ is such that $\Cl_m \cong \K(n)$ and where $\SO(k)$, $\U(k)$, $\Sp(k)$ acts by its standard representation on $\K^k = \R^k$, $\C^k$, resp.\ $\H^k$.
In case $m \equiv 3 \Mod{4}$, the $A(N)_0$-representation on~$\g{v}$ is equivalent to $(\Delta^+ \otimes_\K \K^{k_+}) \oplus (\Delta^- \otimes_\K \K^{k_-})$, where $\Delta^+$, $\Delta^-$ denote
the two half-spin-representations of~$\Spin(m)$ on~$\K^n$, $n$ is such that $\Cl_m \cong \K(n) \oplus \K(n)$ and where $\Sp(k_{\pm})$, $\SO(k_{\pm})$ act by their standard representations on $\K^{k_{\pm}} = \H^{k_{\pm}}$,  resp.\ $\K^{k_{\pm}} = \R^{k_{\pm}}$.
In case the Damek-Ricci space is a symmetric space, the isometry group of~$S$ contains the group
$A(N) \ltimes L(S)$ as a proper subgroup.
\end{remark}

A submanifold $M$ of a Riemannian manifold~$N$ is called a \emph{totally geodesic submanifold} if every geodesic of~$M$ is also a geodesic of~$N$. In~\cite{knp}, a characterization of totally geodesic submanifolds of Damek-Ricci spaces was obtained. According to their result, all complete totally geodesic submanifolds of Damek-Ricci spaces are isometric to rank-one symmetric spaces of negative curvature or are of the following type.

\begin{theorem}[\cite{Rou}]\label{thm:rou}
Let $S$ be a Damek-Ricci space.
A connected Lie subgroup~$\Sigma$ of~$S$ containing~$\exp(\g{a})$ is a totally geodesic submanifold if and only if $T_e\Sigma = \g{a} \oplus \g{v}' \oplus \g{z}'$, where $\g{v}' \subseteq \g{v}$, $\g{z}' \subseteq \g{z}$ and 
$J_{\g{z}'}\g{v}' \subseteq \g{v}'$.
\end{theorem}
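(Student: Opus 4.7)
The plan is to reduce the totally-geodesic condition to a linear-algebraic condition on $\g{h}:=T_e\Sigma$ via the Koszul formula. Since $\Sigma$ is a Lie subgroup of~$S$ containing~$\exp(\g{a})$, the subalgebra $\g{h}$ is $\ad(A)$-invariant and contains~$\g{a}$. As $\ad(A)$ acts by $0$, $\tfrac12\id$, $\id$ on $\g{a}$, $\g{v}$, $\g{z}$ respectively, and these eigenspaces are distinct, this already forces $\g{h}=\g{a}\oplus\g{v}'\oplus\g{z}'$ with $\g{v}':=\g{h}\cap\g{v}$ and $\g{z}':=\g{h}\cap\g{z}$; the remaining demand that $\g{h}$ be a subalgebra amounts to $[\g{v}',\g{v}']\subseteq\g{z}'$.

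Because left translations by elements of $\Sigma$ are isometries of~$S$ preserving~$\Sigma$, the submanifold $\Sigma$ is totally geodesic if and only if its second fundamental form vanishes at~$e$, i.e.\ $\nabla_XY\in\g{h}$ for all $X,Y\in\g{h}$ (viewed as left-invariant vector fields on~$S$). I would test this via the Koszul formula
\[
2\langle\nabla_XY,W\rangle=\langle[X,Y],W\rangle-\langle[Y,W],X\rangle+\langle[W,X],Y\rangle,
\]
with $W$ ranging over the orthogonal complement $\g{h}^\perp=\g{v}''\oplus\g{z}''$, where $\g{v}=\g{v}'\oplus\g{v}''$ and $\g{z}=\g{z}'\oplus\g{z}''$ are orthogonal decompositions.

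I would then run through the bilinear cases coming from the splitting $\g{h}=\g{a}\oplus\g{v}'\oplus\g{z}'$. Thanks to $[\g{v},\g{z}]=[\g{z},\g{z}]=0$ and to the fact that $\ad(A)$ preserves each of $\g{v}'$, $\g{v}''$, $\g{z}'$, $\g{z}''$, almost every pair contributes trivially. The case $X,Y\in\g{v}'$ with $W\in\g{z}''$ collapses to $\langle J_WX,Y\rangle$, which vanishes exactly when $[\g{v}',\g{v}']\subseteq\g{z}'$, hence automatically by the subgroup hypothesis.

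The only decisive case is $X=U\in\g{v}'$, $Y=Z\in\g{z}'$, $W\in\g{v}''$. Since $[U,Z]=0$ and $[Z,W]=0$, only the last term of the Koszul formula survives, and using the defining identity $\langle J_ZW,U\rangle=\langle[W,U],Z\rangle$ together with the skew-adjointness of~$J_Z$ this reduces to
\[
2\langle\nabla_UZ,W\rangle=-\langle J_ZU,W\rangle.
\]
This vanishes for every $W\in\g{v}''$ precisely when $J_ZU\in\g{v}'$, and letting $U\in\g{v}'$ and $Z\in\g{z}'$ vary recovers exactly the condition $J_{\g{z}'}\g{v}'\subseteq\g{v}'$ of the theorem. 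I expect the main obstacle to be purely clerical: a disciplined enumeration of the Koszul sub-cases and a clean separation of the two conditions they produce---the subalgebra condition, automatic by the subgroup hypothesis, and the genuinely new $J$-invariance condition isolated above.
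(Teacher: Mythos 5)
Your argument is correct, and there is nothing in the paper to compare it with: the paper quotes Theorem~\ref{thm:rou} from~\cite{Rou} without proof. Your reduction is the natural one and all steps check out: $\ad(A)$-invariance of $\g{h}=T_e\Sigma$ forces the decomposition $\g{a}\oplus\g{v}'\oplus\g{z}'$, left-translation homogeneity reduces total geodesy to the vanishing of the second fundamental form at~$e$, and the Koszul formula for left-invariant fields leaves only the two cases you isolate; the decisive one, $2\langle\nabla_U Z,W\rangle=-\langle J_Z U,W\rangle$ for $U\in\g{v}'$, $Z\in\g{z}'$, $W\in\g{v}''$, agrees with the known connection formula $\nabla_U Z=-\tfrac12 J_Z U$ of \cite[4.1.6]{BTV95}, and it yields exactly $J_{\g{z}'}\g{v}'\subseteq\g{v}'$, while the case $X,Y\in\g{v}'$, $W\in\g{z}''$ is indeed absorbed by the subalgebra condition $[\g{v}',\g{v}']\subseteq\g{z}'$, which holds automatically for a subgroup. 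The only point worth making explicit in a written version is the (easy) verification that left-invariant extensions of vectors in $\g{h}$ are tangent to $\Sigma$ along $\Sigma$, which is what legitimizes computing the second fundamental form with the Lie-group Koszul formula.
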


In this article, we will mainly study the special case of polar actions on Damek-Ricci spaces where the section of the polar action is a totally geodesic submanifold~$\Sigma$ as described in Theorem~\ref{thm:rou}.


\section{Polar actions}\label{sec:polact}


A proper isometric action of a Lie group~$G$ on a Riemannian manifold~$M$ is called \emph{polar} if there exists a \emph{section}, i.e.\ a connected closed embedded submanifold~$\Sigma$ of~$M$ such that $G \cdot \Sigma = M$ and $T_p(G \cdot p) \perp T_p\Sigma$ for all~$p \in \Sigma$. The section of a polar action is not unique; indeed, if $\Sigma$ is a section, then $g \cdot \Sigma$ is also a section for all $g \in G$, and conversely, all sections of a polar action are given in this way. Sections of polar actions are always totally geodesic submanifolds~\cite[Theorem~3.2]{PaTe}.
We will denote the Levi-Civita connection of a Riemannian manifold by $\nabla$ and we will denote the normal space of a submanifold~$X$ at the point~$p$ by~$N_pX$.

The following lemma is a basic observation in connection with polar actions.
\begin{lemma}[\cite{DK10}]\label{lm:orkf}
Let $M$ be a Riemannian manifold and let $\Sigma$ be a connected totally
geodesic submanifold of~$M$. Let $p\in\Sigma$ and let $\eta$ be a Killing vector
field. Then $\eta(q) \in N_q\Sigma$ holds for all $q \in \Sigma$ if and only if
$\eta(p) \in N_p\Sigma$ and $\nabla_v \eta \in N_p \Sigma$ for all $v \in T_p\Sigma$.
\end{lemma}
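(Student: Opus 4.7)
The plan is to treat the two directions separately; the ``only if'' direction is almost immediate, while the ``if'' direction is the substantive half. For ``only if'', I would evaluate the assumption at $p$ to get $\eta(p) \in N_p\Sigma$ at once, and then for $v \in T_p\Sigma$ extend $\eta|_\Sigma$ locally to a section of $N\Sigma$ and use that the Weingarten operator of a totally geodesic submanifold vanishes, so that $\nabla_v\eta = \nabla_v^{\perp}\eta \in N_p\Sigma$.

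For the ``if'' direction I would exploit that a Killing field restricted to a geodesic is a Jacobi field. Given $v \in T_p\Sigma$, the curve $\gamma(t) := \exp_p(tv)$ lies in $\Sigma$ because $\Sigma$ is totally geodesic, and $J(t) := \eta(\gamma(t))$ satisfies $\nabla_t^2 J + R(J,\gamma')\gamma' = 0$. The crucial point is that this Jacobi equation decouples with respect to the orthogonal splitting $T_{\gamma(t)}M = T_{\gamma(t)}\Sigma \oplus N_{\gamma(t)}\Sigma$: the covariant derivative $\nabla_t$ along $\gamma$ preserves this splitting because the second fundamental form vanishes, and the curvature operator $X \mapsto R(X,\gamma')\gamma'$ preserves it as well---for $X \in T\Sigma$ this is the Codazzi equation applied with $II=0$, and for $X \in N\Sigma$ it follows by combining the pair-exchange and skew symmetries of $R$ with the previous case.

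Once the decoupling is in hand, writing $J = J^T + J^N$ one finds that $J^T$ is itself a Jacobi field along $\gamma$ with vanishing initial data $J^T(0) = (\eta(p))^T = 0$ and $(\nabla_t J^T)(0) = ((\nabla_v \eta)(p))^T = 0$, coming directly from the two hypotheses. Uniqueness of solutions forces $J^T \equiv 0$, so $\eta$ is normal to $\Sigma$ along $\gamma$. A standard connectedness argument then upgrades this from a neighborhood of $p$ to all of $\Sigma$: the set $A \subseteq \Sigma$ on which both hypotheses hold is closed by continuity, is nonempty by assumption, and is open because the conclusion just obtained combined with the ``only if'' direction applied pointwise shows that any $q \in A$ admits a neighborhood in $\Sigma$ contained in~$A$. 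Connectedness of $\Sigma$ then yields $A = \Sigma$.

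The main obstacle is the decoupling claim---verifying that $R(\cdot,\gamma')\gamma'$ respects the tangent/normal decomposition of $\Sigma$. This is a short calculation with the symmetries of $R$ and Codazzi, but it is the geometric heart of the lemma; everything else reduces to uniqueness for a linear ODE and a standard open-and-closed argument.
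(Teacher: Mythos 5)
Your proof is correct: the ``only if'' direction via the vanishing Weingarten operator, the decoupling of the Jacobi equation along geodesics of the totally geodesic $\Sigma$ (Codazzi with $II=0$ plus self-adjointness of $R(\cdot,\gamma')\gamma'$), the uniqueness argument for $J^T$ with zero initial data, and the open-and-closed step that handles possible non-completeness of $\Sigma$ are all sound. The paper itself does not prove this lemma but quotes it from~\cite{DK10}, and your argument is essentially the standard Jacobi-field proof underlying that reference, so there is nothing to flag.
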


Let us prove the following criterion for polarity.
\begin{proposition}\label{prop:pasl}
Assume a Lie group $H$ acts properly and isometrically on a connected complete Riemannian manifold~$M$ and let $p \in M$.
Let $\Sigma$ be a connected totally geodesic submanifold of~$M$ containing~$p$.
Let $H_p = \{ h \in H \mid h \cdot p = p \}$ be the isotropy subgroup of~$H$ at the point~$p$ and let $\g{h}_p$ be its Lie algebra. Let $\g{m}$ be some vector space complement of~$\g{h}_p$ in~$\g{h}$.
Then the $H$-action on~$M$ is polar with section~$\Sigma$ if and only if both the following conditions hold.
\begin{enumerate}
  \item The slice representation of~$H_p$ on the normal space~$N_p (H \cdot p)$ is polar with section~$T_p\Sigma$.
  \item We have $\langle \nabla_v \eta, w \rangle = 0$ for all Killing vector fields~$\eta$ on~$M$ induced by elements of~$\g{m}$ and all $v,w \in T_p\Sigma$.
\end{enumerate}
\end{proposition}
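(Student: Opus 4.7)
The plan is to apply Lemma~\ref{lm:orkf} to each Killing field $\eta$ on~$M$ induced by an element of~$\g{h}$, splitting $\eta = \eta_X + \eta_Y$ with $X \in \g{h}_p$ and $Y \in \g{m}$, thereby translating the global orthogonality of~$\Sigma$ to all $H$-orbits into the two pointwise conditions at~$p$.

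For necessity, polarity forces every such~$\eta$ to be normal to~$\Sigma$ along~$\Sigma$, so the ``only if'' direction of Lemma~\ref{lm:orkf} gives $\eta(p) \in N_p\Sigma$ and $\nabla_v \eta \in N_p\Sigma$ for all $v \in T_p\Sigma$. Applied to $\eta_Y$ this yields~(ii); meanwhile, the standard fact that the slice representation of a polar action is polar with the tangent space of the section as a section yields~(i), where for the perpendicularity statement one checks that for $X \in \g{h}_p$ the map $v \mapsto -\nabla_v\eta_X$ is the slice action of~$X$ on $N_p(H\cdot p)$.

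For sufficiency, assume (i) and~(ii). I verify the hypotheses of the ``if'' direction of Lemma~\ref{lm:orkf}. At~$p$: $\eta_X(p) = 0$, and condition~(i) implies $T_p\Sigma \subseteq N_p(H\cdot p)$, so $\eta_Y(p) \in T_p(H\cdot p) \subseteq N_p\Sigma$. For the derivative condition, $\nabla_v \eta_Y \perp T_p\Sigma$ is exactly~(ii), while $\nabla_v \eta_X$ equals up to sign the infinitesimal slice action of~$X$ on~$v$, which by~(i) is tangent to an $H_p$-orbit in $N_p(H\cdot p)$ and hence perpendicular to the section~$T_p\Sigma$. Lemma~\ref{lm:orkf} then gives orthogonality of~$\Sigma$ to all orbits along~$\Sigma$.

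It remains to show $H \cdot \Sigma = M$. From~(i) the slice representation satisfies $H_p \cdot T_p\Sigma = N_p(H\cdot p)$, and since $\exp_p(T_p\Sigma \cap B_\varepsilon) \subseteq \Sigma$ (as $\Sigma$ is totally geodesic and $M$ is complete), the slice theorem gives that $H \cdot \Sigma$ contains a tubular neighborhood of~$H\cdot p$, hence is open. The main obstacle I foresee is the global coverage; I expect to argue that $H \cdot \Sigma$ is both open and closed in the connected manifold~$M$, with openness propagating by $H$-equivariance to neighborhoods of the full orbit $H\cdot p$ and closedness following from properness of the action, closedness of~$\Sigma$ and completeness of~$M$, via a minimizing-geodesic argument from~$\Sigma$ to a would-be boundary point of~$H\cdot\Sigma$ using the orthogonality established above.
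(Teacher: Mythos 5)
Your treatment of the orthogonality conditions is essentially correct and mirrors the paper's reduction: both directions are handled through Lemma~\ref{lm:orkf} after splitting a Killing field into a part from~$\g{h}_p$ and a part from~$\g{m}$, and your direct verification that $\nabla_v\eta_X$ for $X\in\g{h}_p$ is (up to sign) the infinitesimal slice action, hence orthogonal to the section~$T_p\Sigma$ of the polar slice representation, is exactly the ingredient the paper imports from the proof of \cite[Theorem~7]{DK10}; the necessity direction via \cite[Theorem~4.6]{PaTe} and Lemma~\ref{lm:orkf} is the same as in the paper.

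The genuine gap is in the coverage step $H\cdot\Sigma=M$. The paper disposes of this by citing \cite[Corollary~6]{DK10}, whereas you attempt it from scratch, and your sketch does not close: the fact that $H\cdot\Sigma$ contains a tubular neighborhood of the single orbit~$H\cdot p$ does not make $H\cdot\Sigma$ open --- openness would have to be checked at points $h\cdot q$ with $q\in\Sigma$ away from~$H\cdot p$, where you have no control over the slice representation, and $H$-equivariance only spreads your neighborhood along the orbit~$H\cdot p$, not along~$\Sigma$. The closedness half is likewise only announced (``I expect to argue\dots''), so the open--closed scheme is not a proof as it stands. The standard way to fill this (and effectively the content of the cited corollary) is more direct and avoids open--closed altogether: given $x\in M$, the orbit $H\cdot p$ is closed by properness, so by completeness of~$M$ there is a minimizing geodesic~$\gamma$ from $H\cdot p$ to~$x$; after translating by an element of~$H$ one may assume $\gamma(0)=p$ and $\gamma'(0)\in N_p(H\cdot p)$, and by condition~(i) there is $h\in H_p$ with $dh\,\gamma'(0)\in T_p\Sigma$; since $\Sigma$ is totally geodesic and closed in the complete manifold~$M$ (hence complete), the geodesic $h\circ\gamma$ stays in~$\Sigma$, so $h\cdot x\in\Sigma$ and $x\in H\cdot\Sigma$. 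Without this (or an equivalent appeal to \cite[Corollary~6]{DK10}), your sufficiency direction only establishes that the orbits meet~$\Sigma$ orthogonally along~$\Sigma$, not that $\Sigma$ meets every orbit.
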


\begin{proof}
First assume $H$, $M$, $p$ and $\Sigma$ are as described in the hypotheses of the proposition and conditions~(i) and~(ii) hold.
Then it follows that $T_p\Sigma \subseteq N_p(H \cdot p)$ and the slice representation of~$H_p$ on~$N_p(H \cdot p)$ is such that $T_p\Sigma$ meets all $H_p$-orbits. In order to apply~\cite[Corollary~6]{DK10}, we need to show that $\nabla_v\eta \in N_p\Sigma$ for all $v \in T_p\Sigma$ and all Killing vector fields $\eta$ induced by the $H$-action.
Since we assume~(ii) holds, it suffices to check this condition for Killing vector fields induced by elements of~$\g{h}_p$.
It follows from the proof of~\cite[Theorem~7]{DK10} that the condition holds for such Killing fields if the slice representation is polar. We have shown that the $H$-action on~$M$ is polar with section~$\Sigma$.

Conversely, it is a well-known fact~\cite[Theorem~4.6]{PaTe} for polar actions that (i)~holds and it follows from Lemma~\ref{lm:orkf} that (ii)~holds since the $H$-orbits meet~$\Sigma$ orthogonally.
\end{proof}

In order to apply the above criterion in the context of Damek-Ricci spaces, we prove the following.
\begin{lemma}\label{lm:DRpc}
Assume $\Sigma$ is a totally geodesic subgroup of a Damek-Ricci space~$S$ such that $T_e\Sigma = \g{a} \oplus \g{v}' \oplus \g{z}'$, where $\g{v}' \subseteq \g{v}$, $\g{z}' \subseteq \g{z}$.
Let $H$ be a closed subgroup of~$S$ whose Lie algebra~$\g{h}$ is orthogonal to~$T_e\Sigma$.
Let $\eta$ be a Killing vector field induced by the action of~$H$ on~$S$ by left translations.
Then the following are equivalent:
\begin{enumerate}
  \item $\langle \nabla_v \eta, w \rangle = 0$ for all $v,w \in T_e\Sigma$.
  \item $J_X\g{v}' \perp \g{v}'$, where $\eta(e)=X+U$, $U \in \g{v}$, $X \in \g{z}$.
\end{enumerate}
\end{lemma}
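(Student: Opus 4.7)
My plan is to derive a closed-form expression for $(\nabla_v \eta)(e)$ purely in terms of the Lie bracket of~$\g{s}$ and the maps~$J_\bullet$, and then read off from it which terms can survive once the orthogonality and the totally geodesic hypotheses are imposed. Since $H$ acts on~$S$ by left translations, the Killing vector field $\eta$ with $\eta(e) = U + X$ is the right-invariant vector field on~$S$ extending this tangent vector.

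First I would extend $v, w \in T_e\Sigma$ to left-invariant vector fields $\hat v, \hat w$ on~$S$ and apply Koszul's formula to $\langle \nabla_{\hat v} \eta, \hat w\rangle$ at~$e$. The key simplification is that left- and right-invariant vector fields on a Lie group always commute (their flows are right- and left-translations, which manifestly commute), so the bracket terms $[\hat v, \eta]$ and $[\eta, \hat w]$ vanish identically. For the directional-derivative terms, pulling $\eta(g)$ back to $\g{s}$ via left-trivialization gives $\Ad(g^{-1})\eta(e)$, whose derivative at~$e$ in the direction~$v$ is $-[v, \eta(e)]$. Assembling the pieces and passing to adjoints~$\ad^*$ with respect to $\langle\cdot,\cdot\rangle$, I arrive at
\begin{equation*}
2\langle \nabla_v \eta, w\rangle(e) \;=\; \bigl\langle [\eta(e), v] - \ad_{\eta(e)}^* v - \ad_v^* \eta(e),\, w\bigr\rangle.
\end{equation*}

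Next I would expand the right-hand side using the explicit brackets $[A,U']=\tfrac12 U'$, $[A,X']=X'$, $\langle [U',V'],X'\rangle_{\g{n}} = \langle J_{X'}U',V'\rangle_{\g{n}}$, and $[\g{v},\g{z}]=[\g{z},\g{z}]=0$, together with the resulting formulas for~$\ad_{U'}^*$ and $\ad_{X'}^*$ (whose nonzero contributions on $\g{a}\oplus\g{v}\oplus\g{z}$ are easy to tabulate). Writing $v = \alpha A + V' + Y'$ and $w = \beta A + W' + Z'$ with $V',W' \in \g{v}'$, $Y',Z' \in \g{z}'$, and using the hypothesis $\g{h} \perp T_e\Sigma$ (so that $U \perp \g{v}'$, $X \perp \g{z}'$ and $\eta(e)$ has no $\g{a}$-component), all $A$-contributions cancel and only three genuinely nontrivial terms survive, producing
\begin{equation*}
2\langle \nabla_v \eta, w\rangle(e) \;=\; \langle J_{Z'} U, V'\rangle \;-\; \langle J_{Y'} U, W'\rangle \;-\; \langle J_X V', W'\rangle.
\end{equation*}

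To finish, I would invoke Theorem~\ref{thm:rou}: since $\Sigma$ is totally geodesic, $J_{\g{z}'}\g{v}' \subseteq \g{v}'$, and since each $J_{Z'}$ is skew-symmetric on~$\g{v}$, it also preserves the orthogonal complement $\g{v}\ominus\g{v}'$. Hence $J_{Y'} U, J_{Z'} U \in \g{v}\ominus\g{v}'$, and the first two terms vanish unconditionally. What is left is the vanishing of $\langle J_X V', W'\rangle$ for all $V', W' \in \g{v}'$, which is precisely the condition $J_X \g{v}' \perp \g{v}'$ of~(ii), giving the equivalence (i)$\Leftrightarrow$(ii). The main technical obstacle I anticipate is the adjoint bookkeeping in the expansion step: $\ad_{U'}^*$ and $\ad_{X'}^*$ act very differently on the three summands $\g{a}, \g{v}, \g{z}$, and one must carefully combine all six Koszul terms before the orthogonality hypothesis produces the clean three-term expression above; once this is done, the totally geodesic hypothesis automatically handles two of the three terms and isolates exactly the $J_X$-condition stated in~(ii).
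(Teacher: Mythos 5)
Your proof is correct, and I checked the two computational claims it rests on: the mixed Koszul computation does give
\begin{equation*}
2\langle \nabla_v \eta, w\rangle(e) \;=\; \langle [\eta(e),v],w\rangle-\langle[\eta(e),w],v\rangle-\langle[v,w],\eta(e)\rangle,
\end{equation*}
and expanding this with $v=\alpha A+V'+Y'$, $w=\beta A+W'+Z'$, $\eta(e)=U+X$ and the orthogonality $U\perp\g{v}'$, $X\perp\g{z}'$ indeed leaves exactly $\langle J_{Z'}U,V'\rangle-\langle J_{Y'}U,W'\rangle-\langle J_XV',W'\rangle$, which by skew-symmetry of the maps $J_\bullet$ coincides with the paper's expression $-\langle J_ZV,U\rangle+\langle J_YW,U\rangle-\langle J_XV,W\rangle$. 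The overall strategy is the same as the paper's (reduce $\langle\nabla_v\eta,w\rangle$ at $e$ to a Lie-algebraic expression, then use Theorem~\ref{thm:rou} together with $U\perp\g{v}'$ to kill the $J_{Y'}$- and $J_{Z'}$-terms and isolate the $J_X$-condition), but the derivation of the connection formula differs: the paper extends $v,w$ to right-invariant fields, which are themselves Killing, and applies the standard identity $2g(\nabla_\xi\eta,\zeta)=g([\xi,\eta],\zeta)+g([\xi,\zeta],\eta)+g(\xi,[\eta,\zeta])$ for three Killing fields together with $[\xi,\eta](e)=-[\xi(e),\eta(e)]$, whereas you extend $v,w$ left-invariantly and use the plain Koszul formula, exploiting that left- and right-invariant fields commute and that the left-trivialization of the right-invariant field $\eta$ is $g\mapsto\Ad(g^{-1})\eta(e)$. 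Your route is slightly more elementary in that it needs no identity special to Killing fields (only left-invariance of the metric), at the cost of a bit more bookkeeping with $\ad^*$; the paper's route is marginally shorter because the Killing identity hands it the bracket expression at $e$ immediately. Either way the equivalence of (i) and (ii) follows as you state.
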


\begin{proof}
We will compute an expression for $\langle \nabla_v \eta, w \rangle$ for arbitrary $v,w \in T_e\Sigma$, where~$\eta$ is a Killing vector field on~$S$ induced by the $H$-action.  (Note that such a Killing field is right-invariant, but in general not left-invariant, thus the formula for the Levi-Civita connection of Damek-Ricci spaces in~\cite[4.1.6]{BTV95} cannot be applied directly.)
We will use the following equality, which  holds for
arbitrary Killing vector fields $\xi$, $\eta$, $\zeta$ on a Riemannian manifold~$(M,g)$:
\[
2g( \nabla_{\xi}\eta,\zeta ) =
g( [\xi,\eta],\zeta )+
g( [\xi,\zeta],\eta )+
g( \xi,[\eta,\zeta] ).
\]
Note that for right-invariant vector fields $\xi,\eta$ on a Lie group we have $[\xi,\eta](e)=-[\xi(e),\eta(e)]$.
Let $V,W \in \g{v}'$, $Y,Z \in \g{z}'$  and let $s,t \in \R$. Let $\xi$ be the right-invariant extension of $V + Y + sA \in \g{s}$ and let $\zeta$ be the right-invariant extension of $W + Z + tA \in \g{s}$. Let $g( \cdot, \cdot )$ be the left-invariant Riemannian metric on~$\g{s}$ which is induced by the inner product~$\langle \cdot,\cdot\rangle$ on~$\g{s}$. Then $\xi$ and~$\zeta$ are Killing vector fields on~$S$. We compute:
\begin{align*}
2g( \nabla_{\xi}\eta,\zeta )&(e) =
g( [\xi,\eta],\zeta )(e)+g( [\xi,\zeta],\eta )(e)+g( \xi,[\eta,\zeta] )(e) = \\
=&-\left\langle [V,U] + \textstyle\frac{s}2U+sX,W + Z + tA \right\rangle\\
&-\left\langle [V,W] -\textstyle\frac{t}2V -tY +\textstyle\frac{s}2W +sZ,U + X \right\rangle \\
&-\left\langle V + Y + sA,[U,W] -\textstyle\frac{t}2U -tX \right\rangle = \\
=&-\langle[V,U],Z\rangle-\langle[V,W],X\rangle-\langle Y,[U,W] \rangle = \\
= &-\langle J_Z V, U \rangle
-\langle J_X V, W \rangle
+\langle J_Y W, U \rangle = -\langle J_X V, W \rangle,
\end{align*}
where we have used that $J_ZV, J_YW \in \g{v}' \perp U$ by Theorem~\ref{thm:rou} in the last step.
This shows the equivalence in the assertion of the lemma.
\end{proof}

We can now prove the following characterization of polar foliations on Damek-Ricci spaces~$S$ given by the right cosets of a subgroup~$H$ of~$S$ such that the section is as described in Theorem~\ref{thm:rou}.

\begin{theorem}\label{thm:polfol}
Let $S$ be a Damek-Ricci space with Lie algebra~$\g{s} = \g{a} \oplus \g{v} \oplus \g{z}$, where $\g{v}\neq0$.
Assume $\Sigma$ is a connected totally geodesic proper subgroup of~$S$ such that $T_e\Sigma = \g{a} \oplus \g{v}' \oplus \g{z}'$, where $\g{v}' \subseteq \g{v}$, $\g{z}' \subseteq \g{z}$.
Then there exists a closed connected subgroup~$H \subseteq S$ which acts polarly on~$S$ by left translations such that $\Sigma$~is a section if and only if $\g{z}'=0$ and $J_{\g{z}}\g{v}' \perp \g{v}'$.
\end{theorem}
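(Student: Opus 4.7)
The strategy is to apply Proposition~\ref{prop:pasl} at the point $p = e$, in combination with Lemma~\ref{lm:DRpc} and the constraint that $\g{h}$ must be a subalgebra. A key preliminary observation is that the action of $H$ on $S$ by left translations is free, so at $e$ the isotropy is trivial and the slice representation is trivial; condition~(i) of Proposition~\ref{prop:pasl} then reduces to $T_e\Sigma = N_e(H \cdot e) = \g{h}^\perp$, and polarity forces $\dim\Sigma = \dim S - \dim H$. Writing $\g{v}''$ and $\g{z}''$ for the orthogonal complements of $\g{v}'$ in $\g{v}$ and of $\g{z}'$ in $\g{z}$, the shape of $T_e\Sigma$ thus forces $\g{h} = \g{v}'' \oplus \g{z}''$ in the necessity direction, and tells us which $\g{h}$ to try in the sufficiency direction.

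For sufficiency, assume $\g{z}' = 0$ and $J_\g{z}\g{v}' \perp \g{v}'$, and set $\g{h} := \g{v}'' \oplus \g{z}$. This is a subalgebra since $[\g{v}'',\g{v}''] \subseteq \g{z} \subseteq \g{h}$ and $\g{z}$ is central in $\g{n}$; its connected subgroup $H$ is closed because $\g{n}$ is 2-step nilpotent. By construction $\g{h}^\perp = \g{a} \oplus \g{v}' = T_e\Sigma$, giving condition~(i). For condition~(ii), apply Lemma~\ref{lm:DRpc} to each Killing field $\eta$ induced from $\g{h}$: its $\g{z}$-component ranges over all of $\g{z}$, so the hypothesis $J_\g{z}\g{v}' \perp \g{v}'$ is exactly what is needed.

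For necessity, assume such an $H$ exists, so $\g{h} = \g{v}'' \oplus \g{z}''$ as above. The main work is to show $\g{z}' = 0$. Here I would use that $\g{h}$ is a subalgebra: $[\g{v}'',\g{v}''] \subseteq \g{h} \cap \g{z} = \g{z}''$, which via the defining identity $\langle J_Z U,V\rangle = \langle [U,V],Z\rangle$ translates into $J_{\g{z}'}\g{v}'' \subseteq \g{v}'$. Combined with the totally geodesic condition $J_{\g{z}'}\g{v}' \subseteq \g{v}'$ from Theorem~\ref{thm:rou}, this yields $J_{\g{z}'}\g{v} \subseteq \g{v}'$. Since $J_Z^2 = -|Z|^2\id$ renders $J_Z$ invertible whenever $Z \neq 0$, any nonzero $Z \in \g{z}'$ would force $\g{v}' = \g{v}$, i.e.\ $\g{v}'' = 0$. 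But then Lemma~\ref{lm:DRpc} applied to the Killing fields from $\g{h} = \g{z}''$ yields $J_{\g{z}''}\g{v} \perp \g{v}$, so $\g{z}'' = 0$ by injectivity of $Z \mapsto J_Z$, making $\Sigma = S$ and contradicting properness. Hence $\g{z}' = 0$, and Lemma~\ref{lm:DRpc} for the Killing fields from $\g{h} = \g{v}'' \oplus \g{z}$ delivers $J_\g{z}\g{v}' \perp \g{v}'$.

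The main subtlety I anticipate is identifying the extra ingredient needed to eliminate $\g{z}'$: Lemma~\ref{lm:DRpc} alone only produces $J_{\g{z}''}\g{v}' \perp \g{v}'$ and says nothing about $\g{z}'$ itself, so the bracket-closure of $\g{h}$ combined with Theorem~\ref{thm:rou} is essential to upgrade this to $\g{z}' = 0$ and $J_\g{z}\g{v}' \perp \g{v}'$.
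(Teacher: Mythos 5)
Your proposal is correct and follows essentially the same route as the paper: both reduce the problem to Proposition~\ref{prop:pasl} and Lemma~\ref{lm:DRpc}, identify $\g{h}$ with the orthogonal complement $\g{v}''\oplus\g{z}''$ of $T_e\Sigma$, and use the subalgebra condition $[\g{v}'',\g{v}'']\subseteq\g{z}''$ together with Theorem~\ref{thm:rou} and the invertibility of $J_Z$ for $Z\neq0$ to force $\g{z}'=0$ before reading off $J_{\g{z}}\g{v}'\perp\g{v}'$. The only differences are cosmetic (you obtain $\g{h}=(T_e\Sigma)^\perp$ from the trivial slice representation rather than the paper's dimension count for the coset foliation, and you phrase the dichotomy as ``$\g{z}'\neq0\Rightarrow\g{v}''=0$'' instead of ``$\g{v}''=0$ or $\g{z}'=0$''), so no further comment is needed.
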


\begin{proof}
Assume $\Sigma$ is as described in the hypothesis of the theorem and that there is a closed connected subgroup~$H \subseteq S$ acting polarly on~$S$ by left translations such that $\Sigma$~is a section.
Since the orbits of the $H$-action are right cosets of elements in~$S$, it follows that they are all of the same dimension, in particular, it follows that $\dim(H) + \dim(\Sigma) = \dim(S)$ and that all slice representations are trivial.

Since $H$ and $\Sigma$ intersect perpendicularly in the identity element~$e$ of~$S$, it follows that $\g{h}$ is the orthogonal complement of $T_e\Sigma$. In particular, it is of the form $\g{h} = \g{v}'' \oplus \g{z}''$ where $\g{v}'' \subset \g{v}$, $\g{z}'' \subseteq \g{z}$.

The condition that $\g{h}$ is a subalgebra of~$\g{s}$ is then equivalent to $[\g{v}'',\g{v}''] \subseteq \g{z}''$.
Let $T,U \in \g{v}''$ and let $Z \in \g{z}'$.
Then it follows that $0 = \langle J_Z T, U \rangle = \langle [T,U], Z \rangle$.
By Theorem~\ref{thm:rou} the maps $J_Z$, $Z \in \g{z}'$, leave $\g{v}'$ invariant, thus they also leave $\g{v}''$ invariant and hence induce a linear automorphism of~$\g{v}''$ if $Z\neq0$. It follows that $\g{h}$ is a subalgebra of~$\g{s}$ if and only if $\g{v}''=0$ or $\g{z}'=0$.

It follows from Proposition~\ref{prop:pasl} and Lemma~\ref{lm:DRpc} that the $H$-action on~$S$ is polar if and only if $J_XV \perp W$ for all $V,W \in \g{v}'$ and all $X \in \g{z}''$.
Assume $\g{v}''=0$. The map $J_X$ is a linear automorphism of~$\g{v}'=\g{v}$ for any non-zero $X \in \g{z}''$; thus it follows that $\g{v}'=\g{v}=0$ or $\g{z}''=0$;
in both cases we have arrived at a contradiction, since we assume that $\g{v}\neq0$ and that $\Sigma$ is a proper subgroup of~$S$.

Hence we have shown that $\g{z}'=0$, or, equivalently, $\g{z}''=\g{z}$. Now it follows from Proposition~\ref{prop:pasl} and Lemma~\ref{lm:DRpc} that $J_XV \perp W$ for all $V,W \in \g{v}'$ and all $X \in \g{z}$.

Conversely, by Proposition~\ref{prop:pasl} and Lemma~\ref{lm:DRpc} the $H$-action on~$S$ is polar if $\g{z}'=0$ and $J_{\g{z}}\g{v}' \perp \g{v}'$.
\end{proof}

\begin{remarks}
The condition $\g{z}'=0$ means that the sections of the polar actions described in~Theorem~\ref{thm:polfol} are isometric to~$\R\HH^d$, where $d = \dim(\g{v}')+1$.
In the special case where $\g{v}'=0$, we have the action of the generalized Heisenberg group~$N$ on~$S$, which is obviously of cohomogeneity one; it generalizes the well-known codimension one foliation of hyperbolic space by horospheres.
In the special case where $\g{z}=0$, i.e.\ $S\cong \R\HH^{k+1}$, $k=\dim(\g{v})$, the condition $J_{\g{z}}\g{v}' \perp \g{v}'$ becomes trivial and the $H$-action is polar for any arbitrary linear subspace~$\g{v}'$ of~$\g{v}$, cf.~\cite{wu}. For specific examples, see Corollary~\ref{cor:pfol}.
\end{remarks}

We will now study isometric actions of groups of the form $A(N)_0 \ltimes L(Z)$ on the Damek-Ricci space~$S$, where $Z$ is the connected subgroup of~$S$ with Lie algebra~$\g{z}$. We will classify all polar actions among these. A necessary condition is that the slice representation at the identity element is polar, see~\cite[Theorem~4.6]{PaTe}. This slice representation is equivalent to the action of~$A(N)_0$ on~$\g{v}$ plus the one-dimensional trivial representation on~$\g{a}$. Hence it is polar if and only if the $A(N)_0$-action on~$\g{v}$ is. The Damek-Ricci spaces where this is the case are given by the following lemma.
\begin{lemma}\label{lm:pavp}
Let $N$ be a generalized Heisenberg group with Lie algebra $\g{n} = \g{v} \oplus \g{z}$, where $\g{v}\neq0$.
Then the action of~$A(N)_0$ on~$\g{v}$ is polar if and only if $N$ is isomorphic to one of the following:
$N(0,k), k\ge1$; 
$N(1,k), k\ge1$; 
$N(2,k), k\ge1$; 
$N(3,k_+,k_-), k_\pm\ge0, (k_+,k_-)\neq(0,0)$; 
$N(4,1)$; 
$N(5,1)$; 
$N(6,1)$; 
$N(7,1,0)$; $N(7,0,1)$; 
$N(7,2,0)$; $N(7,0,2)$; 
$N(7,3,0)$; $N(7,0,3)$; 
$N(8,1)$. 
\end{lemma}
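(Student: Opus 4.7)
The approach is to invoke Dadok's theorem, which asserts that every polar orthogonal representation of a compact connected Lie group is orbit-equivalent to the isotropy representation of a Riemannian symmetric space (an $s$-representation). Combined with the explicit description of~$A(N)_0$ and its action on~$\g{v}$ given in Remark~\ref{rem:iso}, this reduces the lemma to deciding which of the tensor-product representations $\Delta\otimes_\K\K^k$ — or the direct sum $(\Delta^+\otimes_\K\K^{k_+})\oplus(\Delta^-\otimes_\K\K^{k_-})$ for $m\equiv 3\Mod{4}$ — are orbit-equivalent to an $s$-representation.

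First I would dispose of the low-$m$ cases uniformly. For $m\in\{0,1\}$ the representation is the standard action of~$\SO(k)$ on~$\R^k$ or~$\U(k)$ on~$\C^k$, which is the isotropy representation of $\R\HH^{k+1}$, respectively $\C\HH^{k+1}$, and hence polar of cohomogeneity one. For $m=2$ the group $\U(1)\cdot\Sp(k)$ acts on~$\H^k$ with the same orbits as $\Sp(1)\cdot\Sp(k)$, which is the isotropy representation of~$\H\HH^{k+1}$, hence polar. For $m=3$ the group $\Sp(1)\cdot[\Sp(k_+)\times\Sp(k_-)]$ acts diagonally on $\H^{k_+}\oplus\H^{k_-}$ via the two inequivalent half-spin representations of~$\Sp(1)$; this is orbit-equivalent to the isotropy representation of a quaternionic Grassmannian (or of a product of two quaternionic hyperbolic spaces sharing a common $\Sp(1)$-factor) and is therefore polar for every admissible pair $(k_+,k_-)$.

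Next, I would treat the sporadic exceptions for $m\geq 4$. When $k=1$, or when $m=7$ and $(k_+,k_-)\in\{(1,0),(0,1)\}$, the $A(N)_0$-representation on~$\g{v}$ is simply the (half-)spin representation of~$\Spin(m)$, which is classically known to act transitively on the unit sphere of~$\g{v}$; hence it is of cohomogeneity one and therefore polar. This accounts for $N(4,1), N(5,1), N(6,1), N(7,1,0), N(7,0,1), N(8,1)$. The remaining listed cases $N(7,2,0)$ and $N(7,0,2)$ give the action of $\Spin(7)\cdot\SO(2)$ on $\R^8\otimes\R^2\cong\R^{16}$; this is orbit-equivalent to an $s$-representation of rank two appearing in Dadok's list (related, via triality, to the isotropy action on a symmetric space of type~$\mathrm{E}$), hence polar.

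The main obstacle is to show that every remaining case with $m\geq 4$ is \emph{not} polar. This requires a systematic inspection of Dadok's classification of polar irreducible representations together with the classification of polar reducible representations (due to Bergmann, or Dadok). Concretely, for each pair $(m,k)$ with $m\geq 4$ and $k\geq 2$ outside $m=7$, and for each triple $(m,k_+,k_-)$ with $m=7$ outside the listed exceptions, one checks that the corresponding representation $\Delta\otimes_\K\K^k$, respectively $(\Delta^+\otimes_\K\K^{k_+})\oplus(\Delta^-\otimes_\K\K^{k_-})$, has cohomogeneity or dimension incompatible with any $s$-representation of the required type. A short-circuit in many subcases is to observe that since $\Sp(k)$, $\U(k)$ or $\SO(k)$ already acts transitively on the unit sphere of $\K^k$, the $A(N)_0$-orbit structure on~$\g{v}$ coincides with that of $\Spin(m)\cdot K(k)$ acting on $\Delta\otimes_\K\K^k$, and one rules out these representations from the tables of $s$-representations by matching dimensions and isotropy types. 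I expect this case-by-case verification to be the lengthiest and most delicate step.
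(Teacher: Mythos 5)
Your overall route---Dadok's theorem plus Bergmann's classification of reducible polar representations, applied to the explicit $A(N)_0$-modules of Remark~\ref{rem:iso}---is the same one the paper takes, but two of your steps have genuine gaps. First, the transitivity claim you use to dispose of the $k=1$ cases is false for $m=4$ and $m=8$: there $\g{v}$ is an irreducible $\Cl_m$-module, and since $\Spin(m)$ sits inside the even subalgebra $\Cl_m^0\cong\Cl_{m-1}$, this module splits under $\Spin(m)$ into the two half-spin summands. Hence $A(N(4,1))_0=[\Sp(1)\cdot\Sp(1)]\cdot\Sp(1)$ acts on $\H\oplus\H$ and $\Spin(8)$ acts on $\R^8\oplus\R^8$, in both cases with cohomogeneity two, not one. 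These actions are still polar (each summand is sphere-transitive, so a section is spanned by one unit vector from each summand), but your stated reason does not prove it, and this $\Cl_m$-versus-$\Cl_{m-1}$ distinction is precisely the delicate point the paper's proof addresses explicitly.

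Second, your non-polarity analysis runs only over pairs $(m,k)$ with $k\ge2$ and over triples with $m=7$, so it never treats $N(m,1)$ for $m\ge9$ (e.g.\ $N(9,1)$, $N(10,1)$, $N(12,1)$, $N(16,1)$) nor $N(m,k_+,k_-)$ for $m\equiv3\Mod{4}$, $m\ge11$; all of these must be excluded for the ``only if'' direction. The case $N(9,1)$ shows this is not a formality: $\Spin(9)$ \emph{is} transitive on the unit sphere of the irreducible $\Cl_8$-module $\R^{16}$, but here $\g{v}$ is the irreducible $\Cl_9$-module $\C^{16}$, which as a real $\Spin(9)$-module is a sum of two equivalent copies of $\R^{16}$, and $\Spin(9)\cdot\U(1)$ acting on it is not polar---ruling this out requires Bergmann's result on reducible polar representations rather than a dimension comparison with irreducible $s$-representations. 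Relatedly, you give no a priori bound on $m$ (the paper first cuts the problem down to $m\le16$, $m\neq11,13,14,15$, using Dadok's classification), and your closing ``short-circuit'' is vacuous because $A(N)_0$ already equals $\Spin(m)\cdot K$ with $K\in\{\SO(k),\U(k),\Sp(k)\}$. A smaller slip: $\Spin(7)\cdot\SO(2)$ on $\R^8\otimes\R^2$ is orbit-equivalent to the isotropy representation of $\SO(10)/(\SO(8)\times\SO(2))$, not of a symmetric space of type $\mathrm{E}$.
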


\begin{proof}
Follows from~\cite{B,Da85}. It is clear \emph{a priori} from the results of~\cite{Da85} that we only have to check the cases where $m=\dim(\g{z}) \le 16$, $m\neq11,13,14,15$.
Let us list the $A(N)_0$-actions on~$\g{v}$ for the remaining cases:
\begin{itemize}
  \item If $0 \le m \le 3$ we have the following actions:
      $\SO(k)\curvearrowright\R^k$,
      $\U(k)\curvearrowright\C^k$,
      $\U(1)\cdot\Sp(k)\curvearrowright\H^k$,
      $\Sp(1) \cdot [\Sp(k_+)\times\Sp(k_-)]\curvearrowright\H^{k_+}\oplus\H^{k_-}$, which are all polar.

  \item If $m \in \{4,5,6,8\}$, we have the following actions: $[\Sp(1)\cdot\Sp(1)]\cdot\Sp(k)\curvearrowright\H^k\oplus\H^k$, $\Sp(2)\cdot\U(k)\curvearrowright\C^4\otimes_\C\C^k$, $\SU(4)\cdot\SO(k)\curvearrowright\R^8\otimes_\R\R^k$, $\Spin(8)\cdot\SO(k)\curvearrowright\R^{16} \otimes_\R \R^k$, which are polar if and only if $k=1$.

  \item If $m=7$ we have the action $\Spin(7)\cdot[\SO(k_+)\times\SO(k_-)]\curvearrowright\R^8\otimes_\R[\R^{k_+}\oplus\R^{k_-}]$, which is polar if and only if $(k_+,k_-) \in \{ (1,0), (0,1), (2,0), (0,2), (3,0), (0,3) \}$.

  \item If $m \in \{9,10,12,16\}$, we have the actions $\Spin(9)\cdot\U(k)\curvearrowright\C^{16}\otimes_\C\C^k$, $\Spin(10)\cdot\Sp(k)\curvearrowright\H^{16}\otimes_\H\H^k$, $\Spin(12)\cdot\Sp(k)\curvearrowright\H^{32}\otimes_\H\H^k$, $\Spin(16)\cdot\SO(k)\curvearrowright\R^{256}\otimes_\R\R^k$, none of which is polar for~$k \ge 1$.

\end{itemize}
Note that in Remark~\ref{rem:iso} the spin representation of~$\Spin(m)$ is a Clifford module of~$\Cl_m$. However, in other contexts, one also refers to modules of the Clifford algebra~$\Cl_{m-1}$ as the spin representation of~$\Spin(m)$. This is due to the fact that the group $\Spin(m)$ is contained in the even subalgebra~$\Cl_m^0$, which is isomorphic to~$\Cl_{m-1}$, see~\cite[Ch.~I, Thm.~3.7]{lm}. For example, the group $\Spin(9)$ has an irreducible effective (and polar) representation on~$\R^{16}$, which is a Clifford module of~$\Cl_8$, whereas the representation referred to in Remark~\ref{rem:iso} as the spin representation of~$\Spin(9)$ is defined on~$\C^{16}$, which is an irreducible Clifford module of~$\Cl_9$. However, viewed as a real representation, the $\Spin(9)$-representation on~$\C^{16}$ splits into two equivalent irreducible submodules and is therefore not polar~\cite{B}. For more details on the spin representations involved here, see~\cite{lm}.
\end{proof}

In particular, it follows from Lemma~\ref{lm:pavp} that the representation of~$A(N)_0$ on~$\g{v}$ is of cohomogeneity two exactly in the cases of~$N(3,k_+,k_-), k_\pm\ge1$; $N(4,1)$; $N(7,2,0)$; $N(7,0,2)$; $N(8,1)$; it is of cohomogeneity three exactly in the cases $N(7,3,0)$; $N(7,0,3)$; it is of cohomogeneity one or non-polar otherwise.
We can now prove our second main result.
\begin{theorem}\label{thm:pasl}
Let $S$ be a Damek-Ricci space. Then the action of~$H=A(N)_0 \ltimes L(Z)$ on~$S$ is polar if and only if  $S$ is one of the following: $S(k,0)$, $S(0,k)$, $S(1,k)$, $S(2,k)$, $S(3,k_+,k_-)$, $S(5,1)$, $S(6,1)$, $S(7,1,0)$, $S(7,0,1)$, $S(7,2,0)$, $S(7,0,2)$, $S(7,3,0)$, $S(7,0,3)$, where $k,k_+,k_-$ are non-negative integers.
\end{theorem}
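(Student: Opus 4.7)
The plan is to apply Proposition~\ref{prop:pasl} at $p = e$. Since $A(N)_0$ fixes $e$, the orbit is $H \cdot e = Z$, giving $T_e(H \cdot e) = \g{z}$ and normal space $\g{a} \oplus \g{v}$. The isotropy is $H_e = A(N)_0$, which acts on $\g{a} \oplus \g{v}$ trivially on $\g{a}$ and via the representation of Remark~\ref{rem:iso} on $\g{v}$. I take $\g{m} = \g{z}$ as a complement of $\Lie(A(N)_0)$ in $\g{h}$; the corresponding Killing fields come from left translations by~$Z$, so Lemma~\ref{lm:DRpc} applies to them directly.

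Condition~(i) of Proposition~\ref{prop:pasl} forces the slice representation to be polar, which by Lemma~\ref{lm:pavp} restricts $S$ to the list there (together with the degenerate $S(k,0)$ where $\g{v}=0$ and the slice is trivially polar). Any section $\Sigma$ through $e$ then has $T_e\Sigma = \g{a}\oplus\g{v}'$ with $\g{v}' \subseteq \g{v}$ a section of the slice action on~$\g{v}$; the inclusion $\g{a}\subseteq T_e\Sigma$ forces $\exp(\g{a}) \subseteq \Sigma$, and Theorem~\ref{thm:rou} (with $\g{z}'=0$) identifies $\Sigma$ locally with the totally geodesic subgroup $\exp(\g{a}\oplus\g{v}')$. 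Applying Lemma~\ref{lm:DRpc} with $U=0$ rewrites condition~(ii) of Proposition~\ref{prop:pasl} as $J_X \g{v}' \perp \g{v}'$ for all $X \in \g{z}$, equivalently $[\g{v}',\g{v}']=0$. Since $A(N)_0$ acts on $\g{s}$ by Lie algebra automorphisms and all sections of its polar action on~$\g{v}$ are $A(N)_0$-conjugate, this isotropy property is independent of the chosen section, and it suffices to test one canonical representative per case.

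The proof then reduces to a case-by-case verification. When the slice action is of cohomogeneity one (covering $S(k,0)$, $S(0,k)$, $S(1,k)$, $S(2,k)$, $S(5,1)$, $S(6,1)$, $S(7,1,0)$, $S(7,0,1)$, and $S(3,k_+,k_-)$ with $k_+=0$ or $k_-=0$), the section $\g{v}'$ has dimension at most one and $[\g{v}',\g{v}']=0$ is automatic. The remaining cohomogeneity-two cases in the theorem are $S(3,k_+,k_-)$ with $k_\pm\ge1$, $S(7,2,0)$ and $S(7,0,2)$. For $m=3$ the decomposition $\g{v}=(\Delta^+\otimes_\H\H^{k_+})\oplus(\Delta^-\otimes_\H\H^{k_-})$ is preserved by every $J_X$ (since $\Cl_3=\H\oplus\H$ splits into blocks acting on each half-spin separately), so a section of the form $\R v_+\oplus\R v_-$ with $v_\pm$ in each summand is isotropic automatically. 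For $m=7$ with $k_+=2,\,k_-=0$ (and its symmetric partner), an SVD-style reduction using $\Spin(7)$-transitivity on orthonormal $2$-frames in~$\R^8$ combined with the $\SO(2)$-rotation places a section in the form $\R(e_1\otimes f_1)\oplus\R(e_2\otimes f_2)$; since $J_X=J_X^\pm\otimes\id$, the only cross term is $\langle J_X^\pm e_1,e_2\rangle \langle f_1,f_2\rangle=0$. For the two excluded cases $S(4,1)$ and $S(8,1)$, the representation $\g{v}=\Delta^+\oplus\Delta^-$ is a sum of two inequivalent half-spin summands that every $J_X$ interchanges; for a canonical section $\R v_+\oplus\R v_-$, the functional $X\mapsto\langle J_X v_+, v_-\rangle$ on $\g{z}$ is nonzero by irreducibility of the Clifford action on $\g{v}$, so $[\g{v}',\g{v}']\neq0$. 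The main technical obstacle I anticipate is in the $S(7,2,0)$ case, where the normalization of the section into orthogonal-pair form requires the SVD-style argument; the other verifications are short once the Clifford module structure is set up.
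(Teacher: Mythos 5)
Your overall strategy is the paper's: apply Proposition~\ref{prop:pasl} at $e$ with $\g{m}=\g{z}$, invoke Lemma~\ref{lm:pavp} to reduce to the spaces whose slice representation is polar, and then test the condition $J_{\g{z}}\g{v}'\perp\g{v}'$ (your reformulation $[\g{v}',\g{v}']=0$) on an explicit section; the cohomogeneity-one cases and the positive cohomogeneity-two cases $S(3,k_+,k_-)$, $S(7,2,0)$, $S(7,0,2)$ are handled with exactly the same sections and the same Clifford-block observations as in the paper. Your explicit remark that the condition is $A(N)_0$-invariant and that all sections of the slice representation are conjugate, so that one representative suffices also in the negative direction, is a point the paper leaves implicit. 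The one genuine divergence is the exclusion of $S(4,1)$ and $S(8,1)$: the paper uses the isomorphism $\Cl_n\cong\Cl_{n+1}^0$ and the transitivity of $\Spin(5)\cong\Sp(2)$ and $\Spin(9)$ on the unit sphere of~$\g{v}$, whereas you use the splitting $\g{v}=\Delta^+\oplus\Delta^-$ interchanged by every~$J_X$ and the nonvanishing of $X\mapsto\langle J_Xv_+,v_-\rangle$. Your route is more direct, but two small repairs are needed. First, ``by irreducibility of the Clifford action'' is not by itself a valid reason (irreducibility does not in general force $J_{\g{z}}v_+$ to exhaust the orthogonal complement of~$v_+$); the correct argument is a dimension count: $X\mapsto J_Xv_+$ is injective, since $\langle J_Xv_+,J_Yv_+\rangle=\langle X,Y\rangle\,|v_+|^2$, its image lies in~$\Delta^-$, and for $m=4,8$ one has $\dim\g{z}=m=\dim\Delta^-$, so $J_{\g{z}}v_+=\Delta^-\ni v_-$. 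Second, you assert without proof that a section has the form $\R v_+\oplus\R v_-$ with $v_\pm\in\Delta^\pm$ nonzero; this should be justified by noting that the $A(N)_0$-orbits in these two cases are the products of spheres $\eS^3\times\eS^3$ resp.\ $\eS^7\times\eS^7$ in $\Delta^+\oplus\Delta^-$, so such a plane meets all orbits orthogonally. Finally, in the necessity direction $\g{a}\oplus\g{v}'$ is not yet known to be a subalgebra, so Lemma~\ref{lm:DRpc} does not apply verbatim; one should remark that its computation only uses $T_e\Sigma$ with $\g{z}'=0$ --- the paper has the same mild informality, so this is not held against you.
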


\begin{proof}
Note that for~$S(k,0)$ (the case where $\g{v}=0$) the $H$-action induces the codimension-one foliation by horospheres on~$\R\HH^{k+1}$, which is polar.

In the case where $\g{v}\neq0$, we will use Proposition~\ref{prop:pasl} and Lemma~\ref{lm:DRpc} to decide if the actions of $H = A(N)_0 \ltimes L(Z)$ on~$S$ are polar.
We choose $\g{z}$ to be the space~$\g{m}$ in Proposition~\ref{prop:pasl}.
In all cases, the orbit $H \cdot e$ through the identity element is given by the subgroup~$Z$ of~$S$.
It follows that $N_e(H \cdot e) = \g{a} \oplus \g{v}$ and furthermore, the isotropy subgroup~$H_e$ of~$H$ at~$e$ is given by~$A(N)_0$. The slice representation of the $H$-action at the point~$e$ is therefore
equivalent to the action of~$A(N)_0$ on~$\g{v}$ plus the one-dimensional trivial module~$\g{a}$. Hence the slice representation is polar if and only if the action of~$A(N)_0$ on~$\g{v}$ is.
Since slice representations of polar actions are polar, it suffices to consider the actions as given by Lemma~\ref{lm:pavp}.

In all the cases considered below we will find a section~$\g{v}'$ for the polar $A(N)_0$-action on~$\g{v}$. Note that since we assume $\g{z} \subseteq \g{h}$, the condition that the linear subspace $\g{a} \oplus \g{v}'$ of~$\g{s}$ is a subalgebra is equivalent to
$0 = \langle [V_1, V_2] , Z \rangle = \langle J_Z V_1, V_2 \rangle$ for all $V_1,V_2 \in \g{v}'$ and all~$Z \in \g{z}$ (it is trivial if $\dim(\g{v}')=1$); in other words, it is equivalent to condition~(ii) in Lemma~\ref{lm:DRpc} and thus it suffices to check this condition in all cases.

Let us consider the actions on~$\g{v}$ given in Lemma~\ref{lm:pavp} which are of cohomogeneity one.
In this case the slice representation of the $H$-action on~$S$ at the point~$e$ is of cohomogeneity two.
Let $V$ be an arbitrary non-zero vector in~$\g{v}$.
Then the subspace of~$\g{s}$ spanned by~$A$ and~$V$ is a section of the slice representation.
It follows from Lemma~\ref{lm:DRpc} that condition~(ii) of Proposition~\ref{prop:pasl} holds,
since $\g{v}' = \R V$ is one-dimensional here and we have $\langle J_XV,V \rangle = \langle [V,V], X \rangle=0$ for all $X \in \g{z}$.

The remaining $A(N)_0$-actions in Lemma~\ref{lm:pavp} are of cohomogeneity two or three on~$\g{v}$. Let us treat these actions case by case.

We start with the case $N = N(3,k_+,k_-)$, where $k_\pm\ge1$.
In this case, $A(N)_0 \cong \Sp(1)\cdot[\Sp(k_{+})\times\Sp(k_{-})]$ and $\g{v}$, as an $A(N)_0$-module, is equivalent to $\H^{k_+} \oplus \H^{k_-}$, where $\Sp(k_{\pm})$ acts on~$\H^{k_\pm}$ by its standard representation and $\Sp(1)$ acts on both submodules $\H^{k_+}$ and $\H^{k_-}$ by multiplication with unit quaternions. This representation is of cohomogeneity two, the principal orbits are diffeomorphic to~$\eS^{4k_+-1} \times \eS^{4k_--1}$.
Let $V_1 \in \H^{k_+}$, $V_2 \in \H^{k_-}$ be arbitrary non-zero vectors.
Then a section of the $A(N)_0$-action on~$\g{a}\oplus\g{v}$ is spanned by the vectors~$A,V_1,V_2$.
Condition~(ii) of Proposition~\ref{prop:pasl} is satisfied,
since the maps $J_X$ leave the subspaces $\H^{k_+}$ and $\H^{k_-}$ invariant for all $X \in \g{z}$ and therefore we have $\langle J_XV,W \rangle = 0$ for all $X \in \g{z}$ and all $V,W \in \{V_1,V_2\}$.

Now consider the cases $N = N(7,2,0)$ and $N = N(7,0,2)$. It suffices to consider one of them, since the Damek-Ricci spaces $S(7,2,0)$ and $S(7,0,2)$ are isometric and the two corresponding actions are conjugate.
In contrast to the previous case, the action of~$A(N)_0 \cong  \Spin(7) \cdot \SO(2)$ on~$\g{v}$ here is irreducible.  Indeed, the space~$\g{v}$, as an $A(N)_0$-module is equivalent to~$\R^8 \otimes \R^2$, where $\Spin(7)$ acts on~$\R^8$ by its irreducible $8$-dimensional spin representation and $\SO(2)$ acts on~$\R^2$ by its standard representation. This representation is polar~\cite{Da85} and of cohomogeneity two. Indeed, it has the same orbits as the isotropy representation of the Riemannian symmetric space~$\SO(10) / (\SO(8)\times\SO(2))$. More precisely, the $A(N)_0$-action on~$\g{v}$ is equivalent to this isotropy representation restricted to the subgroup~$\Spin(7) \times \SO(2)$.
It is well known from the theory of Riemannian symmetric spaces that a section of both these polar representations is spanned by $e_1' \otimes e_1''$, $e_2' \otimes e_2''$, where $e_1', \dots, e_8'$ and $e_1'', e_2''$ are arbitrary orthonormal bases of~$\R^8$ resp.~$\R^2$.
Choose $V_1 := e_1' \otimes e_1''$ and $V_2 := e_2' \otimes e_2''$, where we identify~$\g{v}$ with $\R^8 \otimes \R^2$ by a $\Spin(7) \times \SO(2)$-equivariant linear isometry. As a $\Cl_7$-module, $\g{v}$ is equivalent to the orthogonal direct sum $\R^8 \oplus \R^8$ and thus we see that $J_X(V_1) \perp V_2$ for all $X \in \g{z}$. Hence condition~(ii) of Proposition~\ref{prop:pasl} is also satisfied in this case.

The proof in the cases $N = N(7,3,0)$ and $N = N(7,0,3)$ is analogous to that in the preceding paragraph.

We have shown in the three previous cases that the actions considered above are polar with section~$\Sigma$, where $\Sigma$ is the connected totally geodesic subgroup of~$S$ whose Lie algebra is~$\g{a}\oplus\g{v}'$.

Now consider the cases of $S(4,1)$ and~$S(8,1)$. We will show that in both cases condition~(ii) of Proposition~\ref{prop:pasl} does \emph{not} hold, thus the $A(N)_0 \ltimes L(Z)$-actions on these spaces are not polar. Our proof works for both cases simultaneously.
Let $\g{v}$ be an irreducible $\Cl_m$-module, where $m=4$ or $m=8$ and consider the $\Spin(m)$-action on~$\g{v}$. In both cases, this action is of cohomogeneity two. Let $x \in \g{v}$ be a vector which is contained in a principal $\Spin(m)$-orbit. Then there is a vector~$y \in \g{v}$ such that a section~$\g{v}'$ of the polar $\Spin(m)$-action containing $x$ is spanned by the two vectors $x,y\in\g{v}$.

Let $e_1, \dots, e_n \in \R^n$ be generators of~$\Cl_n$ such that $e_i e_j + e_j e_i = -2\delta_{ij}$ for $1 \le i,j \le n$. Then $\spin(n) \subset \Cl_n$ is spanned by the elements $e_i e_j$, where $1 \le i < j \le n$.
However, for all~$n$, the algebra~$\Cl_n$ is isomorphic to the even subalgebra~$\Cl_{n+1}^0$ of~$\Cl_{n+1}$, where the isomorphism is given as follows:
Let $f \colon \R^n \to \Cl_{n+1}^0$ be a linear map defined by
\[
f(e_i) = e_{n+1}' e_i', \hbox{~for~} i = 1, \dots, n,
\]
where $e_1', \dots, e_{n+1}'$ are generators of~$\Cl_{n+1}$ such that $e_i' e_j' + e_j' e_i' = -2\delta_{ij}$ for $1 \le i,j \le n+1$. Then $f$~extends to an algebra isomorphism~$\tilde f \colon \Cl_n \to \Cl_{n+1}^0$, see~\cite[Ch.~I, Thm.~3.7]{lm}.
In this way, a representation module of~$\Cl_n$  becomes a $\Spin(n+1)$-representation, more precisely, the inverse image of $\spin(n+1) \subset \Cl_{n+1}^0$ under~$\tilde f$ is spanned by the elements of
\[
\{ e_i e_j \mid 1 \le i < j \le n \} \cup \{ e_i \mid 1 \le i \le n \}.
\]
In the cases that we are interested here, i.e.\ $n=4$ and $n=8$, we have that $\Spin(n+1)$ acts transitively on the unit sphere in~$\g{v}$. Indeed, the spin representations of~$\Spin(5) \cong \Sp(2)$ and $\Spin(9)$ act transitively on the unit sphere in~$\H^2$ resp.~$\R^{16}$. This shows in both cases that, while the tangent space~$T_x(\Spin(n) \cdot x)$ is orthogonal to~$x$ and $y$,  the tangent space~$T_x(\Spin(n+1) \cdot x)$ is not orthogonal to~$y$. In particular, there is an element~$e_i \in \Cl_n$ such that $J_{e_i} x \not\perp y$, showing that condition~(ii) of Proposition~\ref{prop:pasl} is not satisfied.
\end{proof}

\begin{remarks}
The sections of the polar actions found in Theorem~\ref{thm:pasl} are isometric to $\R\HH^2$, $\R\HH^3$ or $\R\HH^4$, except for the case $S(k,0)$, where the action is of cohomogeneity one and the section is isometric to~$\R$.

We have found polar actions on symmetric spaces in the following cases:
The codimension-one homogeneous foliation by horospheres in case~$S(k,0)$.
The polar action with fixed point of~$\SO(k)$ on~$S(0,k) \cong \R\HH^{k+1}$.
The action on~$S(1,k)$ given in Theorem~\ref{thm:pasl} corresponds to the case~(ii) in~\cite[Theorem~A]{DDK12} with $n=k+1$, $\g{b}=0$, $\g{w}=0$, $\g{q}=\g{u}(k)$.
Polar actions on real and complex hyperbolic spaces have been classified in~\cite{wu} resp.~\cite{DDK12}.
For quaternionic hyperbolic space and the Cayley hyperbolic plane, there are no complete classifications known so far and we find the following examples:
on~$S(3,k,0)\cong\H\HH^{k+1}$ a cohomogeneity-two action of~$\Sp(1)\cdot\Sp(k) \ltimes \R^3$; on~$S(7,1,0)\cong\Ca\HH^2$ a cohomogeneity-two action of~$\Spin(7) \ltimes \R^7$.

The other examples found in Theorem~\ref{thm:pasl} are non-trivial polar actions on the Damek-Ricci spaces $S(2,k), k\ge1$; $S(3,k_+,k_-), k_\pm\ge1$; $S(5,1)$; $S(6,1)$; $S(7,2,0)$; $S(7,0,2)$; $S(7,3,0)$; $S(7,0,3)$, which are not (locally) symmetric.
\end{remarks}

Generalizing the arguments in the proof of~Theorem~\ref{thm:pasl}, we obtain the following criterion.
\begin{theorem}\label{thm:mthm}
Let $S$ be a Damek-Ricci space, $\g{s} = \g{a} \oplus \g{v} \oplus \g{z}$.
Let $\g{w}$ be a linear subspace of the real vector space~$\g{v}$.
Then $\g{h} := \g{w} \oplus \g{z}$ is a subalgebra of~$\g{s}$, let $H$ be the corresponding connected closed subgroup of~$S$.
Let $Q \subseteq A(N)_0$ be a connected closed subgroup whose action on~$\g{v}$ leaves~$\g{w}$ invariant.
Then the $Q \ltimes L(H)$-action on~$S$ is polar if and only if $Q$~acts polarly on the orthogonal complement of~$\g{w}$ in~$\g{v}$ with a section~$\ell$ such that $J_X \ell \perp \ell$ for all $X \in \g{z}$.
\end{theorem}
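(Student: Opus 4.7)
The plan is to apply Proposition~\ref{prop:pasl} at the base point $p = e \in S$ with acting group $G := Q \ltimes L(H)$. First I would compute the orbit and slice data at~$e$: since every element of $Q \subseteq A(N)_0$ is an automorphism of~$N$ fixing~$e$ while $L(H)$ acts simply transitively on~$S$, the isotropy subgroup is $G_e = Q$ and $L(H) \cdot e = H$. Hence $T_e(G \cdot e) = \g{h} = \g{w} \oplus \g{z}$ and the normal space is $N_e(G \cdot e) = \g{a} \oplus \g{w}^\perp$, where $\g{w}^\perp$ denotes the orthogonal complement of~$\g{w}$ in~$\g{v}$. The slice representation of~$Q$ on this normal space is the given action on~$\g{w}^\perp$ together with the trivial summand~$\g{a}$, so it is polar with a section of the form $\g{a} \oplus \ell$ if and only if $Q$ acts polarly on~$\g{w}^\perp$ with section~$\ell$. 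This handles condition~(i) of Proposition~\ref{prop:pasl}.

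Next I would take as candidate section $\Sigma$ the connected subgroup of~$S$ with Lie algebra $\g{a} \oplus \ell$, so that $T_e \Sigma = \g{a} \oplus \ell$. Using the identity $\langle [V,W], X \rangle = \langle J_X V, W \rangle$, the hypothesis $J_X \ell \perp \ell$ for all $X \in \g{z}$ is equivalent to $[\ell, \ell] = 0$, and hence to $\g{a} \oplus \ell$ being a subalgebra of~$\g{s}$. Under this hypothesis, Theorem~\ref{thm:rou}, applied with $\g{v}' = \ell$ and $\g{z}' = 0$ (so that the condition $J_{\g{z}'} \g{v}' \subseteq \g{v}'$ holds vacuously), guarantees that $\Sigma$ is totally geodesic.

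For condition~(ii) of Proposition~\ref{prop:pasl} I would choose $\g{m} := \g{h} = \g{w} \oplus \g{z}$ as the complement of~$\g{q}$ in the Lie algebra of~$G$; the Killing vector fields $\eta$ it induces are the right-invariant extensions of the elements $U + X \in \g{w} \oplus \g{z}$. Since $\g{h}$ is orthogonal to $T_e \Sigma$, Lemma~\ref{lm:DRpc} applies and, as $X$ ranges over~$\g{z}$, translates the derivative condition $\langle \nabla_v \eta, w \rangle = 0$ for $v, w \in T_e \Sigma$ into exactly $J_X \ell \perp \ell$ for every $X \in \g{z}$ (the $\g{w}$-component of $\eta(e)$ contributes nothing since only the $\g{z}$-part enters the conclusion of the lemma). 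Together with the previous paragraph, Proposition~\ref{prop:pasl} yields the ``if'' direction.

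For the converse I would pick a section of the polar $G$-action passing through~$e$; by \cite[Theorem~4.6]{PaTe} its tangent space at~$e$ is a section of the slice representation, and since every vector of~$\g{a}$ is fixed by the isotropy~$Q$ it must lie in every such section, forcing the tangent space to have the form $\g{a} \oplus \ell$ for some section~$\ell$ of $Q \curvearrowright \g{w}^\perp$. The second part of Proposition~\ref{prop:pasl} then forces condition~(ii), which via Lemma~\ref{lm:DRpc} delivers $J_X \ell \perp \ell$ for all $X \in \g{z}$. The only mildly delicate point is the observation that any section of the slice representation must contain the fixed summand~$\g{a}$; apart from this, the argument is a direct combination of Proposition~\ref{prop:pasl} and Lemma~\ref{lm:DRpc}, and the orthogonal splitting $\g{w} \oplus \g{z} \perp \g{a} \oplus \ell$ is precisely what the lemma requires, so I anticipate no serious obstacle.
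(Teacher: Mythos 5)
Your proposal is correct and follows essentially the same route as the paper's proof: it verifies conditions (i) and (ii) of Proposition~\ref{prop:pasl} at $e$ with $\g{m}=\g{h}=\g{w}\oplus\g{z}$, takes as section the totally geodesic subgroup with Lie algebra $\g{a}\oplus\ell$ (via Theorem~\ref{thm:rou}), and uses Lemma~\ref{lm:DRpc} to translate condition (ii) into $J_X\ell\perp\ell$ for all $X\in\g{z}$. Your added remarks — that $J_X\ell\perp\ell$ is exactly what makes $\g{a}\oplus\ell$ a subalgebra, and that any section of the slice representation must contain the fixed summand $\g{a}$ — are correct and in fact make the converse direction slightly more explicit than the paper's own argument.
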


\begin{proof}
Let $\g{v}\ominus\g{w}$ denote the orthogonal complement of~$\g{w}$ in~$\g{v}$.
The slice representation of the $Q \ltimes L(H)$-action on~$S$ at the identity element is equivalent to the $Q$-representation on~$\g{v}\ominus\g{w}$ plus the one-dimensional trivial module given by~$\g{a}$, thus is polar if and only if $Q$-representation on~$\g{v}\ominus\g{w}$ is.
Hence it is a necessary condition that this representation is polar for the $Q \ltimes L(H)$-action on~$S$ to be polar and we may from now on assume that the $Q$-action on~$\g{v}\ominus\g{w}$ is polar.
Let $\ell$ be a section of this polar representation.
Let $\Sigma$ be the connected totally geodesic subgroup of~$S$ as described in Theorem~\ref{thm:rou}, where $\g{v}'=\ell$, $\g{z}'=0$.
Setting $\g{m}=\g{h}$, it now follows that the $Q \ltimes L(H)$-action on~$S$ is polar if and only if condition~(ii) of Proposition~\ref{prop:pasl} is satisfied. By Lemma~\ref{lm:DRpc}, this is the case if $J_X \ell \perp \ell$ for all $X \in \g{z}$.
\end{proof}

In particular, we obtain the following result on polar actions on quaternionic hyperbolic space, where the condition on the sections to be totally real is analogous to the condition in~\cite[Proposition~4.16]{tebege} for actions on~$\H\PP^n$.
\begin{corollary}\label{cor:qhn}
Let $S = S(3,k,0)$, $\g{s} = \g{a}\oplus\g{v}\oplus\g{z}$.
Then the real vector space~$\g{v}$ has a quaternionic structure given by $J_X,J_Y,J_Z \in \End(\g{v})$, where $X,Y,Z \in \g{z}$ are suitable orthonormal vectors.
Let $\g{w}$ be a linear subspace of the real vector space~$\g{v}$.
Then $\g{h} := \g{w} \oplus \g{z}$ is a subalgebra of~$\g{s}$, let $H$ be the corresponding connected closed subgroup of~$S$.
Let $Q \subseteq A(N)_0 = \Sp(k) \cdot \Sp(1)$ be a connected closed subgroup whose action on~$\g{v}$ leaves~$\g{w}$ invariant.
Then the $Q \ltimes L(H)$-action is polar on~$S \cong \H\HH^{k+1}$ if and only if
$Q$ acts polarly on~$\g{v}$ with a section~$\ell$ which is totally real in the sense that $J_X \ell, J_Y \ell, J_Z \ell \perp \ell$.
\end{corollary}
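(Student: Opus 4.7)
The plan is to reduce the corollary directly to Theorem~\ref{thm:mthm} after fixing the quaternionic structure on $\g{v}$. In any generalized Heisenberg algebra one has $[\g{v},\g{v}]\subseteq\g{z}$ and $[\g{v},\g{z}]=[\g{z},\g{z}]=0$, so the candidate $\g{h}=\g{w}\oplus\g{z}$ is closed under the Lie bracket for every linear subspace $\g{w}\subseteq\g{v}$, independently of $\g{w}$. Because $\dim\g{z}=3$ for $S(3,k,0)$, I would fix an orthonormal basis $X,Y,Z$ of $\g{z}$ and polarise the defining identity $J_T^2=-\langle T,T\rangle\id_{\g{v}}$ to conclude that $J_X,J_Y,J_Z$ are three pairwise anticommuting complex structures on $\g{v}$; this is the quaternionic structure referred to in the statement.

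Next I would apply Theorem~\ref{thm:mthm} to $H$ and to the closed subgroup $Q\subseteq A(N)_0=\Sp(k)\cdot\Sp(1)$. That theorem tells us the $Q\ltimes L(H)$-action on $S$ is polar if and only if $Q$ acts polarly on the orthogonal complement $\g{v}\ominus\g{w}$ with a section $\ell$ satisfying $J_T\ell\perp\ell$ for every $T\in\g{z}$. Since $T\mapsto J_T$ is $\R$-linear and $\{X,Y,Z\}$ spans $\g{z}$, the pointwise orthogonality requirement over all $T\in\g{z}$ collapses to the three conditions $J_X\ell,J_Y\ell,J_Z\ell\perp\ell$, which is exactly the totally real property in the corollary.

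The one point that deserves genuine care, and which I expect to be the main obstacle, is reconciling the corollary's phrasing ``$Q$ acts polarly on $\g{v}$'' with Theorem~\ref{thm:mthm}'s conclusion about $\g{v}\ominus\g{w}$. Since $Q$ preserves the orthogonal decomposition $\g{v}=\g{w}\oplus(\g{v}\ominus\g{w})$, a polar $Q$-representation on $\g{v}$ with totally real section $\ell$ restricts to a polar representation on $\g{v}\ominus\g{w}$ whose section is the orthogonal projection of $\ell$, and the totally real property is inherited from $\ell$; conversely, a totally real section on the complement can be enlarged by a $Q$-invariant choice inside $\g{w}$ to produce a totally real section on all of $\g{v}$. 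Beyond this translation between the two formulations, no further geometric input is needed and the proof reduces to a direct quotation of Theorem~\ref{thm:mthm} in the quaternionic hyperbolic case.
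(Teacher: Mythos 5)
Your first two paragraphs are exactly the paper's route: the corollary is stated there with no separate proof (it is introduced by ``In particular''), i.e.\ it is meant as the specialization of Theorem~\ref{thm:mthm} to $m=3$, where $\g{h}=\g{w}\oplus\g{z}$ is automatically a subalgebra, the maps $J_X,J_Y,J_Z$ for an orthonormal basis of the three-dimensional $\g{z}$ give the quaternionic (Clifford) structure, and linearity of $T\mapsto J_T$ reduces the condition ``$J_T\ell\perp\ell$ for all $T\in\g{z}$'' to the three totally real conditions. (Minor point: pairwise anticommutation only gives the Clifford relations; that $J_XJ_Y=\pm J_Z$, hence a genuine quaternionic structure, uses that $\g{v}$ is an isotypic $\Cl_3$-module for $S(3,k,0)$ — but none of this is needed for the polarity argument.)

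The genuine gap is in your final paragraph. The equivalence you assert between ``$Q$ acts polarly on $\g{v}$ with totally real section'' and ``$Q$ acts polarly on $\g{v}\ominus\g{w}$ with totally real section'' is false, and it is precisely the converse (enlargement) direction that breaks down: a section of the $Q$-action on all of $\g{v}$ must meet every orbit contained in $\g{w}$, and there is no way in general to arrange the resulting subspace to be totally real. Concretely, take $Q$ trivial and $\g{w}\subset\g{v}$ of real codimension one: by Theorem~\ref{thm:mthm} (or Corollary~\ref{cor:pfol}) the $L(H)$-action on $S$ is polar, but the only section of the trivial representation on $\g{v}$ is $\g{v}$ itself, which is never totally real since each $J_X$, $X\neq0$, is a linear automorphism of $\g{v}$. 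For the same reason your forward step is shaky: the orthogonal projection of a section to an invariant subspace need not be a section of the restricted representation (the correct statement, that subrepresentations of polar representations are polar with sections adapted to the splitting, requires a citation or proof). The discrepancy you correctly sensed lies in the wording of the corollary rather than in any missing geometric argument: consistently with its derivation from Theorem~\ref{thm:mthm}, the polarity and the totally real condition are to be imposed on the $Q$-module $\g{v}\ominus\g{w}$ (they coincide with the condition ``on $\g{v}$'' exactly when $\g{w}=0$, i.e.\ $H=Z$). With that reading, your reduction in the first two paragraphs is complete and no bridging argument is needed.
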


As an another consequence of Theorems~\ref{thm:polfol} or~\ref{thm:mthm}, we find a polar regular homogeneous foliation of codimension two on every Damek-Ricci space of dimension~$\ge3$.
\begin{corollary}\label{cor:pfol}
Let $S$ be a Damek-Ricci space, $\g{s} = \g{a} \oplus \g{v} \oplus \g{z}$.
Let $\g{w} \subset \g{v}$ be a linear subspace of codimension one.
Let $H$ be the connected Lie subgroup of~$S$ with Lie algebra~$\g{w} \oplus \g{z}$.
Then the action of~$L(H)$ on~$S$ is polar.
\end{corollary}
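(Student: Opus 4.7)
The plan is to deduce the statement as an immediate specialization of Theorem~\ref{thm:polfol} (or, equivalently, of Theorem~\ref{thm:mthm} with $Q$ taken to be trivial). I would set $\g{v}' \subseteq \g{v}$ to be the one-dimensional orthogonal complement of~$\g{w}$ in~$\g{v}$, and set $\g{z}' := 0$. Let $\Sigma$ be the connected Lie subgroup of~$S$ with $T_e\Sigma = \g{a}\oplus\g{v}'$; by Theorem~\ref{thm:rou} this is a totally geodesic subgroup, since the invariance condition $J_{\g{z}'}\g{v}'\subseteq\g{v}'$ holds vacuously when $\g{z}'=0$. Under this choice the orthogonal complement of $T_e\Sigma$ inside~$\g{s}$ is exactly $\g{w}\oplus\g{z}$, which is the given Lie algebra of~$H$, and a dimension count shows that the orbits of $L(H)$ have codimension two in~$S$, matching the dimension of~$\Sigma$.

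It remains to verify the two hypotheses of Theorem~\ref{thm:polfol}. The first, $\g{z}'=0$, holds by construction. For the second, $J_{\g{z}}\g{v}'\perp\g{v}'$, I pick any nonzero $V\in\g{v}'$ and compute
\[
\langle J_Z V,V\rangle = \langle [V,V],Z\rangle = 0
\]
for every $Z\in\g{z}$, where the first equality is the definition of~$J$ and the second is the antisymmetry of the Lie bracket. Hence the orthogonality condition is automatic precisely because $\g{v}'$ is one-dimensional.

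Applying Theorem~\ref{thm:polfol} then produces a closed connected subgroup of~$S$ acting polarly on~$S$ by left translations with section $\Sigma$ (which is isometric to~$\R\HH^2$), and the ``only if'' direction of that theorem identifies this subgroup as the one whose Lie algebra is the orthogonal complement of $T_e\Sigma$, namely~$\g{w}\oplus\g{z}$; that is exactly~$H$. There is no genuine obstacle here: the codimension-one hypothesis on~$\g{w}$ reduces $\g{v}'$ to a line, which trivializes the only nontrivial condition of Theorem~\ref{thm:polfol}, so the corollary follows with essentially no further work.
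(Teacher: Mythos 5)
Your proposal is correct and is essentially the paper's own argument: the paper proves this corollary by applying Theorem~\ref{thm:mthm} with $Q$ trivial and $\ell$ the orthogonal complement of~$\g{w}$ in~$\g{v}$, noting exactly as you do that $J_X\ell\perp\ell$ is automatic because $\ell$ is one-dimensional. Your primary route through Theorem~\ref{thm:polfol} also works, with the minor caveat that this theorem as stated only asserts the \emph{existence} of some closed connected subgroup acting polarly with section~$\Sigma$, so identifying it with the given~$H$ requires the orthogonality of~$\g{h}$ to $T_e\Sigma$ at~$e$ together with the dimension count (an argument contained in the proof of the ``only if'' direction rather than in the theorem's statement); the $Q$-trivial application of Theorem~\ref{thm:mthm}, which you mention as the equivalent alternative, yields the statement for~$H$ directly and avoids this step.
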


\begin{proof}
Using the notation of Theorem~\ref{thm:mthm}, let $Q$ be the trivial subgroup of~$A(N)_0$.
Let~$\ell$ be the orthogonal complement of~$\g{w}$ in~$\g{v}$. Then $\ell$ is a section for the (trivial) $Q$-action on~$\ell$ and the condition $J_X \ell \perp \ell$ for all $X \in \g{z}$ is trivially satisfied since $\ell$ is one-dimensional.
\end{proof}

By a similar argument, we also find a polar action with a singular orbit on every Damek-Ricci space with $\dim(\g{v})\ge2$. The actions given by the following corollary are of cohomogeneity two and the orbit through the identity element of~$S$ is of codimension three.
\begin{corollary}\label{cor:psgo}
Let $S$ be a Damek-Ricci space, $\g{s} = \g{a} \oplus \g{v} \oplus \g{z}$.
Let $T \subseteq A(H)_0$ be a maximal torus.
Let $\g{w} \subset \g{v}$ be a linear subspace of codimension two such that the $T$-action on~$\g{v}$ leaves~$\g{w}$ invariant and acts non-trivially on its orthogonal complement in~$\g{v}$.
Let $H$ be the connected Lie subgroup of~$S$ with Lie algebra~$\g{w} \oplus \g{z}$.
Then the action of~$T \ltimes L(H)$ on~$S$ is polar.
\end{corollary}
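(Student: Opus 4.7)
The plan is to apply Theorem~\ref{thm:mthm} directly, taking $Q=T$ and $\g{w}$ as given in the hypothesis. Since the theorem guarantees automatically that $\g{h}=\g{w}\oplus\g{z}$ is a subalgebra (because $[\g{v},\g{v}]\subseteq\g{z}$), what remains is to verify the two conditions of Theorem~\ref{thm:mthm}: that $T$ acts polarly on the orthogonal complement $\g{u}:=\g{v}\ominus\g{w}$, and that there is a section $\ell$ of this action with $J_X\ell\perp\ell$ for every $X\in\g{z}$.

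For the first condition, I would observe that $\g{u}$ is a two-dimensional real vector space on which $T$, being a connected compact abelian Lie group, acts orthogonally. Its image in $\O(\g{u})\cong\O(2)$ is therefore a closed connected subgroup lying in $\SO(2)$. Since, by hypothesis, this image is nontrivial, and the only nontrivial closed connected subgroup of $\SO(2)$ is $\SO(2)$ itself, the action of $T$ on $\g{u}$ factors through the standard rotation action of $\SO(2)$ on $\R^2$. This action is polar of cohomogeneity one, and any one-dimensional linear subspace $\ell\subset\g{u}$ through the origin is a section.

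For the second condition, I would pick such a line $\ell\subset\g{u}$ and note that the orthogonality $J_X\ell\perp\ell$ is automatic: for any nonzero $v\in\ell$ and any $X\in\g{z}$,
\[
\langle J_X v,v\rangle = \langle [v,v],X\rangle = 0,
\]
since $[v,v]=0$ in the Lie algebra $\g{n}$. Thus the defining identity $J_X\ell\subseteq\ell^\perp$ holds trivially for the chosen one-dimensional section.

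With both conditions verified, Theorem~\ref{thm:mthm} delivers polarity of the $T\ltimes L(H)$-action on~$S$, with section the two-dimensional totally geodesic subgroup $\Sigma\subset S$ whose Lie algebra is $\g{a}\oplus\ell$ (as in Theorem~\ref{thm:rou}, with $\g{v}'=\ell$, $\g{z}'=0$). There is no substantial obstacle here; the only thing to be careful about is the reduction of the torus action on $\g{u}$ to the standard $\SO(2)$-rotation, which relies precisely on the compactness, connectedness, and nontriviality in the hypothesis. The cohomogeneity-two and codimension-three claims mentioned in the statement then follow by a dimension count: $\dim\Sigma=2$, and the $T\ltimes L(H)$-orbit through~$e$ coincides with $H$, which has codimension $1+\dim\g{u}=3$ in~$S$.
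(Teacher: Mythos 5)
Your proposal is correct and follows essentially the same route as the paper's proof: apply Theorem~\ref{thm:mthm} with $Q=T$, observe that the nontrivial orthogonal action of the connected compact torus on the two-dimensional complement is of cohomogeneity one with any line $\ell$ as a section, and note that $J_X\ell\perp\ell$ holds trivially because $\ell$ is one-dimensional. Your extra details (identifying the image with $\SO(2)$ and the explicit computation $\langle J_Xv,v\rangle=\langle[v,v],X\rangle=0$) merely spell out what the paper states more tersely.
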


\begin{proof}
With the notation of Theorem~\ref{thm:mthm}, let $Q=T$. Let $\g{v}\ominus\g{w}$ be the orthogonal complement of~$\g{w}$ in~$\g{v}$.
Then $Q$~is a connected compact Lie group which acts non-trivially by an orthogonal representation, hence with cohomogeneity one, on~$\g{v}\ominus\g{w}$.
Let $\ell$ be a one-dimensional linear subspace of~$\g{v}\ominus\g{w}$.
Then $\ell$ is a section for the $Q$-action on~$\g{v}\ominus\g{w}$ and the condition $J_X \ell \perp \ell$ for all $X \in \g{z}$ is trivially satisfied since $\ell$ is one-dimensional.
\end{proof}

Note that the restriction on the dimensions for the corollaries above only exclude real hyperbolic spaces, for which non-trivial polar actions are known to exist by other constructions.
Thus we have found non-trivial polar actions on all Damek-Ricci spaces.


\section{Sections which are not subgroups}\label{sec:nsbg}


To further generalize our results, we make the following observation.
\begin{lemma}\label{lm:aact}
Let $M$ be a Riemannian manifold. Let $H_2$ be a Lie group which contains the closed subgroups~$G$ and~$H_1$ such that $H_2 = G H_1$.
Assume that
(i)~the $H_2$-action on~$M$ is proper and isometric,
(ii)~$H_1$ acts polarly on~$M$ with section~$\Sigma_1$,
(iii)~the $H_2$-action restricted to~$G$ leaves $\Sigma_1$ invariant and acts polarly on~$\Sigma_1$ with section~$\Sigma_2$.
Then $H_2$ acts polarly on~$M$ with section~$\Sigma_2$.
\end{lemma}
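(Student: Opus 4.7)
The plan is to check the three defining properties of a polar action with section~$\Sigma_2$: that $\Sigma_2$ is a connected closed embedded submanifold of~$M$, that $H_2 \cdot \Sigma_2 = M$, and that $T_q(H_2 \cdot q) \perp T_q \Sigma_2$ for every $q \in \Sigma_2$. The submanifold condition is immediate, since $\Sigma_2$ is a connected closed embedded submanifold of~$\Sigma_1$ by~(iii) and $\Sigma_1$ has the same relation to~$M$ by~(ii). For the surjectivity condition, I will perform a two-step reduction: given $p \in M$, hypothesis~(ii) supplies some $h_1 \in H_1$ with $h_1 \cdot p \in \Sigma_1$, and then~(iii) supplies $g \in G$ with $gh_1 \cdot p = g \cdot (h_1 \cdot p) \in \Sigma_2$; since $gh_1 \in GH_1 = H_2$, this shows $H_2 \cdot \Sigma_2 = M$.

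The heart of the argument is the orthogonality condition. Fix $q \in \Sigma_2$. Since $H_2 = GH_1$, the Lie algebras satisfy $\g{h}_2 = \g{g} + \g{h}_1$ as real vector spaces, so any tangent vector to $H_2 \cdot q$ at~$q$ is of the form $Y^*(q) + Z^*(q)$, where $Y \in \g{g}$, $Z \in \g{h}_1$, and $X^*$ denotes the Killing vector field on~$M$ induced by an element $X$ of the relevant Lie algebra. Equivalently, $T_q(H_2 \cdot q) = T_q(G \cdot q) + T_q(H_1 \cdot q)$.

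Now take any $w \in T_q \Sigma_2$. Since $\Sigma_2 \subseteq \Sigma_1$, we also have $w \in T_q \Sigma_1$. The summand $Z^*(q) \in T_q(H_1 \cdot q)$ is perpendicular to $T_q\Sigma_1 \supseteq T_q\Sigma_2$ by~(ii), so $\langle Z^*(q), w \rangle = 0$. For the other summand, hypothesis~(iii) asserts that $G$ leaves $\Sigma_1$ invariant, so $Y^*(q) \in T_q\Sigma_1$; because the $G$-action on $\Sigma_1$ equipped with the induced metric (which coincides with the restriction of the metric of~$M$) is polar with section~$\Sigma_2$, we have $Y^*(q) \perp w$ inside~$\Sigma_1$, hence also inside~$M$. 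Adding the two contributions yields $T_q(H_2 \cdot q) \perp T_q \Sigma_2$, completing the argument.

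The only technical point I expect to need care is the identity $\g{h}_2 = \g{g} + \g{h}_1$. I will derive it from $H_2 = GH_1$ by applying Sard's theorem to the smooth multiplication map $\mu \colon G \times H_1 \to H_2$, $(g, h_1) \mapsto gh_1$: surjectivity forces the image to have positive measure, so a regular value exists, and translating by a suitable left translation in~$H_2$ converts this into surjectivity of $d\mu_{(e,e)}$, which is precisely the linear map $(Y, Z) \mapsto Y + Z$ from $\g{g} \oplus \g{h}_1$ to $\g{h}_2$. All other steps simply chain together the polarity of the two component actions.
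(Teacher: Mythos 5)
Your proof is correct and follows essentially the same route as the paper: the same two-step reduction $p \mapsto h_1\cdot p \mapsto gh_1\cdot p$ for meeting $\Sigma_2$, and the same decomposition of Killing fields induced by $\g{h}_2$ into a $\g{g}$-part and an $\g{h}_1$-part, each perpendicular to $T_q\Sigma_2$ by hypotheses (iii) and (ii) respectively. The only difference is that you explicitly justify $\g{h}_2=\g{g}+\g{h}_1$ (via Sard applied to the multiplication map), a fact the paper uses without comment; your justification is sound.
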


\begin{proof}
Let us first show that all $H_2$-orbits intersect $\Sigma_2$: Let $p \in M$, since $H_1$~acts polarly on~$M$ with section~$\Sigma_1$, there is an $h_1 \in H_1$ such that $h_1 \cdot p \in \Sigma_1$ and since $G$~acts polarly on~$\Sigma_1$ with section~$\Sigma_2$, there is a~$g \in G$ such that $g \cdot (h_1 \cdot p) \in \Sigma_2$; hence we have $(g h_1) \cdot p \in \Sigma_2$ and $gh_1 \in H_2$.

Now let $\g{g}$, $\g{h}_1$, $\g{h}_2$ be the Lie algebras of~$G, H_1, H_2$. Any Killing vector field~$\xi$ on~$M$ induced by an element of~$\g{h}_2$ can be written as a sum $\xi = \eta + \zeta$,
where $\eta,\zeta$ are Killing vector fields on~$M$ such that $\eta$~is induced by an element of~$\g{g}$ and $\zeta$~is induced by an element of~$\g{h}_1$. Let $q \in \Sigma_2$. Then we have $\eta(q) \perp T_q\Sigma_2$, since $G$ acts polarly on~$\Sigma_1$ with section~$\Sigma_2$ and we have $\zeta(q) \perp T_q\Sigma_2$, since $H_1$ acts polarly on~$M$ with section~$\Sigma_1$ and $\Sigma_2 \subseteq \Sigma_1$.
This shows that $\xi(q) \perp T_q\Sigma_2$.

We have shown that $H_2$ acts polarly on~$M$ with section~$\Sigma_2$.
\end{proof}

We apply this to find another type of polar actions on Damek-Ricci spaces. For these actions, the sections are not subgroups as described in Theorem~\ref{thm:rou}, but totally geodesic hypersurfaces inside of them.
\begin{corollary}\label{cor:aact}
Let $S$ be a Damek-Ricci space, $\g{s} = \g{a} \oplus \g{v} \oplus \g{z}$.
Let $\g{w}$ be a linear subspace of the real vector space~$\g{v}$.
Then $\g{h} := \g{a} \oplus \g{w} \oplus \g{z}$ is a subalgebra of~$\g{s}$, let $H$ be the corresponding connected Lie subgroup of~$S$.
Let $Q \subseteq A(N)_0$ be a connected closed subgroup whose action on~$\g{v}$ leaves~$\g{w}$ invariant.
Assume $Q$~acts polarly on the orthogonal complement of~$\g{w}$ in~$\g{v}$ with a section~$\ell$ such that $J_X \ell \perp \ell$ for all $X \in \g{z}$.
Then the action of~$Q \ltimes L(H)$ on~$S$ is polar.
\end{corollary}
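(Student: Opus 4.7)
The plan is to derive the corollary from Lemma~\ref{lm:aact}. Let $H'$ be the connected subgroup of $S$ with Lie algebra $\g{w} \oplus \g{z}$, and set $H_1 := Q \ltimes L(H')$, $H_2 := Q \ltimes L(H)$, and $G := L(\exp(\g{a}))$. Since $\ad(A)$ preserves $\g{w} \oplus \g{z}$, the subalgebra $\g{h}$ decomposes as a semidirect sum $\g{a} \oplus (\g{w} \oplus \g{z})$, so $H = \exp(\g{a}) \cdot H'$ and hence $L(H) = G \cdot L(H')$. Moreover, every $q \in Q \subseteq A(N)_0$ fixes $A$ pointwise (cf.\ Remark~\ref{rem:iso}), so $Q$ commutes with $G$ inside the isometry group of $S$. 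An arbitrary element $q \cdot L(\exp(tA) \cdot h') \in H_2$ may therefore be rewritten as $L(\exp(tA)) \cdot (q \cdot L(h')) \in G \cdot H_1$, showing $H_2 = G \cdot H_1$.

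By Theorem~\ref{thm:mthm} applied under the standing hypotheses, the $H_1$-action on $S$ is polar with section $\Sigma_1$, the connected totally geodesic subgroup of $S$ whose Lie algebra is $\g{a} \oplus \ell$; by the remarks after Theorem~\ref{thm:polfol}, $\Sigma_1$ is isometric to $\R\HH^{\dim\ell+1}$. Clearly $G$ preserves $\Sigma_1$, since $\exp(\g{a}) \subseteq \Sigma_1$.

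The main step is to show that $G$ acts polarly on $\Sigma_1$. I would take as candidate section the unique connected totally geodesic submanifold $\Sigma_2 \subseteq \Sigma_1$ through $e$ with $T_e\Sigma_2 = \ell$; this is a copy of $\R\HH^{\dim\ell}$ inside $\R\HH^{\dim\ell+1}$, and it is \emph{not} a Lie subgroup of $\Sigma_1$, so Lemma~\ref{lm:DRpc} does not apply. I would instead verify the criterion of Proposition~\ref{prop:pasl} directly, with $p = e$ and $\g{m} = \g{a}$: condition~(i) is vacuous because $G_e$ is trivial, so the slice representation on $N_e(G \cdot e) = \ell$ is the trivial representation admitting $\ell$ itself as section. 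For condition~(ii), letting $\xi$, $\eta$, $\zeta$ be the right-invariant extensions to $\Sigma_1$ of $v$, $A$, $w$ with $v, w \in \ell$, the Koszul identity from the proof of Lemma~\ref{lm:DRpc} gives
\[
2\langle \nabla_\xi \eta, \zeta\rangle(e) = -\langle [v,A], w\rangle - \langle [v,w], A\rangle - \langle v, [A,w]\rangle = \tfrac{1}{2}\langle v,w\rangle - 0 - \tfrac{1}{2}\langle v,w\rangle = 0,
\]
where we use $[A,U] = U/2$ for $U \in \g{v}$ and the fact that $[\ell,\ell]=0$ inside $\Sigma_1$ (its $\g{z}$-part is trivial, which is precisely where the hypothesis $J_X \ell \perp \ell$ for $X \in \g{z}$ enters, guaranteeing that $\g{a} \oplus \ell$ is a subalgebra).

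With the three hypotheses of Lemma~\ref{lm:aact} in place, it follows that $H_2 = Q \ltimes L(H)$ acts polarly on $S$ with section $\Sigma_2$. The main obstacle is the third step: the section $\Sigma_2$ is a totally geodesic hypersurface of $\Sigma_1$ that is not itself a subgroup, so the streamlined machinery of Lemma~\ref{lm:DRpc} used in the earlier results must be replaced by the above direct Koszul-type computation.
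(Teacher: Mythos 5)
Your proof is correct and follows essentially the same route as the paper: apply Theorem~\ref{thm:mthm} to the subgroup with Lie algebra $\g{w}\oplus\g{z}$ to get a polar action with totally geodesic section $\Sigma_1\cong\R\HH^{\dim\ell+1}$, and then use Lemma~\ref{lm:aact} with the extra factor $L(\exp(\g{a}))$ acting on $\Sigma_1$. The only deviations are cosmetic: you verify $H_2=GH_1$ explicitly (the paper leaves it implicit) and you establish the polarity of the $\exp(\R A)$-action on $\Sigma_1$ by a direct Koszul computation via Proposition~\ref{prop:pasl}, where the paper simply cites \cite[Proposition~2.3]{DDK12}; your computation is correct.
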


\begin{proof}
Consider the subalgebra $\g{k} := \g{w} \oplus \g{z}$ of~$\g{s}$ and let $K$ be the corresponding connected subgroup of~$S$.
It follows from Theorem~\ref{thm:mthm} that the action of~$Q \ltimes L(K)$ on~$S$ is polar, let $\Sigma_1$ be a section containing~$e$.
It follows from the proof of Theorem~\ref{thm:mthm} that $\Sigma_1$ is a connected totally geodesic subgroup of~$S$ as described in Theorem~\ref{thm:rou} with $\g{z}'=0$ and $T_e\Sigma_1 = \g{a} \oplus \g{v}'$, where $\g{v}'=\ell$.
In particular, $\Sigma_1$ is isometric to real hyperbolic space of dimension~$\dim(\g{v}')+1$.
It follows that the action of~$\exp(\R A)$ on~$S$ by left translations leaves~$\Sigma_1$ invariant. It is easy to check, e.g.~using \cite[Proposition~2.3]{DDK12}, that this action on real hyperbolic space is polar.
Hence it follows from Lemma~\ref{lm:aact} that the action  of~$Q \ltimes L(H)$ on~$S$ is polar.
\end{proof}

\begin{proof}[Proof of Theorem~\ref{thm:main}]
Follows from Theorem~\ref{thm:mthm} and Corollary~\ref{cor:aact}.
\end{proof}


\section{Conclusion}\label{sec:concl}


We have found many examples of non-trivial polar actions on Damek-Ricci spaces and have given classifications in some special cases.
However, our approach can only be applied to the case where the section is a subgroup (and actions derived from those using Corollary~\ref{cor:aact}).
In order to find all polar actions by our method, one would need a more explicit description of all totally geodesic subspaces in Damek-Ricci spaces, which of course would be an important result in its own right.


\end{document}